\documentclass[12pt, reqno]{amsart}

\usepackage{verbatim}
\usepackage{fullpage}
\usepackage{amsfonts}
\usepackage{enumerate}
\usepackage{todonotes}
\usepackage[noadjust]{cite}
\usepackage{graphicx}
\usepackage{amsmath, amsthm, amssymb, bbm}
\usepackage{fancybox}
\usepackage{xcolor}

\usepackage{tikz-cd}

\newcommand\iprec{\mathrel{\ooalign{$\prec$\cr
 \,\raise0.85ex\hbox{\scriptsize$\circ$}\cr}}}

\newcommand{\R}{\mathbb{R}}

\newcommand{\Z}{\mathbb{Z}}

\newcommand{\1}{\mathbbm{1}}

\DeclareMathOperator{\diag}{diag}

\DeclareMathOperator{\diam}{diam}

\newcommand{\qn}{\rho_A}

\NewDocumentCommand\TLseq{O{\alpha}}{\mathbf{f}^{#1}_{p,q}}
\newcommand{\TLfA}{\dot{\mathbf{F}}^{\alpha}_{p,q}(A)}
\newcommand{\TLfB}{\dot{\mathbf{F}}^{\alpha}_{p,q}(B)}
\newcommand{\TLfAi}{\dot{\mathbf{F}}^{\alpha}_{\infty,q}(A)}
\newcommand{\TLfAii}{\dot{\mathbf{F}}^{\alpha}_{\infty,\infty}(A)}
\newcommand{\TLsA}{\dot{\mathbf{f}}^{\alpha}_{p,q}(A)}
\newcommand{\TLsB}{\dot{\mathbf{f}}^{\alpha}_{p,q}(B)}
\newcommand{\TLAi}{\dot{\mathbf{f}}^{\alpha}_{\infty,q}(A)}

\newcounter{Theorem}

\numberwithin{equation}{section}
\numberwithin{Theorem}{section}

\theoremstyle{plain} 
\newtheorem{thm}[Theorem]{Theorem}

\newtheorem{lem}[Theorem]{Lemma}

\theoremstyle{definition}
\newtheorem{defn}[Theorem]{Definition}

\theoremstyle{remark}

\title{Triebel-Lizorkin spaces and expansive matrices}

\author{Marcin Bownik}
\address{Department of Mathematics, University of Oregon, Eugene, OR 97403--1222, USA}
\email{mbownik@uoregon.edu}

\author{Jordy Timo van Velthoven}
\address{Faculty of Mathematics,
University of Vienna, 
Oskar-Morgenstern-Platz 1,
1090 Vienna, Austria}
\email{jordy-timo.van-velthoven@univie.ac.at}

\makeatletter
\@namedef{subjclassname@2020}{\textup{2020} Mathematics Subject Classification}
\makeatother

\subjclass[2020]{42B35, 46E35}

\keywords{Expansive matrices, quasi-norms, Triebel-Lizorkin spaces, sequence spaces}

\begin{document}

\maketitle

\begin{center}
    \emph{Dedicated to Hans Triebel on the occasion of his 90th birthday}
\end{center}

\begin{abstract}
We survey recent classification theorems for expansive matrices that generate the same anisotropic homogeneous Triebel-Lizorkin function space or sequence space. The function spaces are classified precisely by those matrices for which their associated homogeneous quasi-norms on Euclidean space are  equivalent, whereas the sequence spaces are classified by a strictly stronger condition. We unravel this discrepancy between function spaces and sequence spaces by showing that two sequence spaces are retracts of each other whenever the corresponding function spaces are the same.
\end{abstract}

\section{Introduction}
Besov and Triebel-Lizorkin spaces provide a unifying framework for various classical function spaces in harmonic analysis, such as BMO spaces, Hardy spaces, Lebesgue spaces, Lipschitz spaces and Sobolev spaces. The unifying perspective that these function spaces have in common is their characterization in terms of a Littlewood-Paley decomposition. In \cite{frazier1985, frazier1990discrete}, Frazier and Jawerth used this characterization to reduce the study of various key properties of Besov and Triebel-Lizorkin spaces to the sequence spaces appearing in the discrete Littlewood-Paley decomposition. The key technical tool in the papers \cite{frazier1985, frazier1990discrete} is the so-called $\varphi$-transform $S_{\varphi}$ associated to a function $\varphi$, which maps a distribution $f$ to a sequence of coefficients $\{ \langle f, \varphi_Q \rangle \}_{Q \in \mathcal{Q}}$, where $\mathcal{Q}$ denotes the set of all dyadic cubes and $\varphi_{Q}$ denotes a translate and dilate of $\varphi$ relative to a dyadic cube $Q$. The inverse $\varphi$-transform $T_{\psi}$ associated to a function $\psi$  maps a sequence $c = \{c_Q \}_{Q \in \mathcal{Q}}$ of complex numbers to $\sum_{Q \in \mathcal{Q}} c_Q \psi_Q$. Under appropriate conditions on the pair $(\varphi, \psi)$, a basic result in \cite{frazier1985, frazier1990discrete} shows the decomposition $f = T_{\psi} S_{\varphi} f$ of any adequate distribution $f$  and shows that membership of $f$ in a Besov space $\dot{\mathbf{B}}_{p,q}^{\alpha}$ or a Triebel-Lizorkin space $\dot{\mathbf{F}}_{p,q}^{\alpha}$ is completely determined by $S_{\varphi} f$ belonging to an associated Besov sequence space $\dot{\mathbf{b}}_{p,q}^{\alpha}$ or Triebel-Lizorkin sequence space $\dot{\mathbf{f}}_{p,q}^{\alpha}$. More precisely, the following diagram commutes (and similarly for Besov spaces).
\begin{center}
\begin{tikzcd}[column sep=small]
& \dot{\mathbf{f}}_{p,q}^{\alpha} \arrow[dr, "T_\psi"]& \\
\dot{\mathbf{F}}_{p,q}^{\alpha} \arrow[ur, hook, "S_\varphi"] \arrow[rr, "id"] &  & \dot{\mathbf{F}}_{p,q}^{\alpha} \\
\end{tikzcd}
\end{center}

The $\varphi$-transform theory of Frazier and Jawerth \cite{frazier1985, frazier1990discrete} has been extended to anisotropic Besov spaces $\dot{\mathbf{B}}_{p,q}^{\alpha} (A)$ and Triebel-Lizorkin spaces $\dot{\mathbf{F}}_{p,q}^{\alpha} (A)$ associated to general expansive dilations by the first author in  \cite{bownik2005atomic, bownik2006atomic, bownik2007anisotropic, bownik2008duality}.
An interesting question on such anisotropic spaces is how these spaces depend on the choice of dilation. This question was settled for anisotropic Hardy spaces by the first author \cite{bownik2003anisotropic} and was settled for anisotropic Triebel-Lizorkin sequence spaces associated to diagonal matrices with positive anisotropy by Triebel \cite{triebel2004wavelet}; see also \cite[Section 5.3]{triebel2006theory}. More recently, such classification theorems have also been settled for anisotropic Besov spaces \cite{cheshmavar2020classification}, anisotropic Triebel-Lizorkin spaces \cite{koppensteiner2024classification, velthoven2024classification} and anisotropic Triebel-Lizorkin sequence spaces \cite{velthoven2026discrete}. For homogeneous function spaces, these classification theorems show that two function spaces associated to expansive matrices $A$ and $B$ coincide (i.e., $\TLfA = \dot{\mathbf{F}}_{p,q}^{\alpha} (B)$) for a generic choice of parameters $(p,q,\alpha)$ precisely when two associated homogeneous quasi-norms $\rho_A , \rho_B : \mathbb{R}^d \to [0, \infty)$ are equivalent (see Section \ref{sec:function}). On the other hand, the coincidence of sequence spaces (i.e., $\TLsA = \dot{\mathbf{f}}_{p,q}^{\alpha} (B)$) is shown to be characterized by the condition that the set $\{A^{-j} B^{j} : j \in \mathbb{Z} \}$ is finite (see Section \ref{sec:sequence}), which is a strictly stronger condition than two homogeneous quasi-norms $\rho_A$ and $\rho_B$ being equivalent.

The aim of the present paper is to survey the aforementioned classification theorems for (homogeneous) anisotropic Triebel-Lizorkin spaces and to unravel the somewhat surprising fact that two function spaces can coincide even when their associated sequence spaces are different. In particular, we show the existence of a natural isomorphism between the spaces $\TLsA$ and $\TLsB$ when $|\det (A)| = |\det (B)|$ and $\TLfA = \TLfB$. The isomorphism has the form of permutation operators, which are induced by bounded displacement bijections between lattices $A^{j}\Z^d$ and $B^{j}\Z^d$ for all scales $j\in \Z$. 
In the general case when $\TLfA = \TLfB$, but possibly $|\det (A)| \neq |\det (B)|$, we show that the spaces $\TLsA$ and $\TLsB$ are retracts of each other, in the sense that the following commutative diagram (for the case $|\det(A)| > |\det(B)|$) holds for adequate operators $S$ and $T$ induced by the displacement injections between lattices $A^{j}\Z^d$ and $B^{i}\Z^d$ for appropriate choice of scales $i=i(j)\in \Z$. We refer to Section \ref{sec:sequence2} for precise details.

\begin{center}
\begin{tikzcd}[column sep=small]
& \TLsB \arrow[dr, "T"]& \\
\TLsA \arrow[ur, hook, "S"] \arrow[rr, "id"] &  & \TLsA\\
\end{tikzcd}
\end{center}

The organization of this paper is as follows. Section \ref{sec:expansive} provides background on expansive matrices and associated homogeneous quasi-norms. The classification theorem for (homogeneous) anisotropic Triebel-Lizorkin function spaces and sequence spaces are discussed in Section \ref{sec:function} and Section \ref{sec:sequence}. Lastly, our new results on the equivalence of sequence spaces are proven in Section \ref{sec:sequence2}.

\subsection*{Notation}
Given two functions $f_1, f_2 : X \to [0, \infty)$ on a set $X$, we write $f_1 \lesssim f_2$ if there exists $C>0$ such that $f_1 (x) \leq C f_2 (x)$ for all $x \in X$. We write $f_1 \asymp f_2$ whenever $f_1 \lesssim f_2$ and $f_2 \lesssim f_1$. 

The Euclidean norm of a vector $x \in \mathbb{R}^d$ will be denoted by $\| x \|$ and for a matrix $A \in \mathbb{R}^{d \times d}$, we write $\|A\|$ for its operator norm. The Lebesgue measure of a measurable set $U \subseteq \mathbb{R}^d$ will be denoted by $|U|$.

\section{Expansive matrices and quasi-norms} \label{sec:expansive}
We start by recalling various basic facts on expansive matrices and associated homogeneous quasi-norms.
We refer to \cite[Chapter 1, Section 2]{bownik2003anisotropic} for further details and properties.

A matrix $A \in \mathrm{GL}(d, \mathbb{R})$ is said to be \emph{expansive} if all its eigenvalues $\lambda \in \sigma(A)$ satisfy $|\lambda|>1$. Equivalently, $A \in \mathrm{GL}(d, \mathbb{R})$ is expansive if $\| A^{-j} \| \to 0$ as $j \to \infty$.

A \emph{homogeneous quasi-norm} associated with an expansive
matrix $A$ is a Borel measurable function $\rho_A: \R^d  \to
[0,\infty)$ satisfying the conditions:
\begin{enumerate}[(i)]
\item $ \rho_A(x) > 0$ for $x \neq 0$;
\item $\rho_A(Ax) = |\det(A)|  \rho_A(x)$ for every $x \in \R^d$;
\item there exists $c>0$ such that $ \rho_A(x+y) \leq
c(\rho_A(x)+\rho_A(y))$ for every $x, y \in \R^d$.
\end{enumerate}
A quasi-norm $\rho_A$ satisfying $\rho_A(x)=\rho_A(-x)$ for all $x\in \R^d$ always exists. Any two quasi-norms $\rho_A, \rho'_A$ associated to an expansive matrix $A$ are equivalent, in the sense that $\rho_A \asymp \rho'_A$. In addition, there exists $c > 0$ such that, for every $x \in \mathbb{R}^d$,
\begin{align*}
 c^{-1} \rho_A (x)^{\frac{\ln \lambda_- }{ \ln |\det(A)|}} \leq \| x \| \leq c \rho_A (x)^{\frac{\ln \lambda_+ }{ \ln |\det(A)|}}, \quad \rho_A (x) \geq 1, \\
 c^{-1} \rho_A (x)^{\frac{\ln \lambda_+ }{ \ln |\det(A)|}} \leq \| x \| \leq c \rho_A (x)^{\frac{\ln \lambda_- }{ \ln |\det(A)|}}, \quad \rho_A (x) \leq 1,
\end{align*}
where $\lambda_-, \lambda_+ \in (0, \infty)$ are constants such that $ \lambda_- < \min_{\lambda \in \sigma(A)} |\lambda| \leq \max_{\lambda \in \sigma(A)} |\lambda| < \lambda_+$.

We say that two expansive matrices $A$ and $B$ are \emph{equivalent} whenever two associated homogeneous quasi-norms $\rho_A$ and $\rho_B$ are equivalent. The following lemma provides a characterization of this equivalence, cf. \cite[Lemma 10.2]{bownik2003anisotropic}. 

\begin{lem}[\cite{bownik2003anisotropic}] \label{lem:equivalent}
Let $A, B \in \mathrm{GL}(d, \mathbb{R})$ be expansive and let $\rho_A, \rho_B$ be associated homogeneous quasi-norms. Then $\rho_A$ and $\rho_B$ are equivalent if and only if
\[
\sup_{j \in \mathbb{Z}} \| A^{-j} B^{\lfloor \varepsilon j \rfloor} \| < \infty,
\]
where $\varepsilon := \ln |\det(A)| / \ln |\det(B)|$. 
\end{lem}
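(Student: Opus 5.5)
We will prove both implications by comparing the ``unit balls'' of the two quasi-norms at matching scales. Set $a := |\det(A)|$, $b := |\det(B)|$, so that $\varepsilon = \ln a/\ln b$ and $a^{j} = b^{\varepsilon j}$. Write $i = i(j) := \lfloor \varepsilon j \rfloor$; then $b^{i} \le a^{j} < b^{i+1}$, so $a^j \asymp b^{i(j)}$ uniformly in $j$. Fix quasi-norms $\rho_A, \rho_B$. By equivalence of quasi-norms, we may replace $\rho_A$ by the step quasi-norm $\rho_A(x) = a^{k}$ for $x \in A^{k+1}\Omega_A \setminus A^k \Omega_A$, where $\Omega_A$ is a fixed ellipsoid with $\Omega_A \subset A\Omega_A$. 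We do the same for $B$. We then use two elementary facts. First, $\{x : \rho_A(x) \le 1\}$ is a bounded set containing a neighbourhood of $0$. Second, $\rho_A(A^{k}x) = a^{k}\rho_A(x)$. We write $U_A := \{\rho_A < 1\}$, and similarly $U_B$.

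\emph{($\Leftarrow$)} Assume $M := \sup_j \|A^{-j}B^{i(j)}\| < \infty$. Given $x \neq 0$, choose $i$ with $\rho_B(x) \asymp b^{i}$, so that $y := B^{-i}x$ satisfies $\rho_B(y) \asymp 1$ and hence $\|y\| \le C$. Choose $j$ with $i(j) \in \{i, i-1, \dots\}$ close to $i$. Since $\varepsilon>0$, the map $j \mapsto \lfloor \varepsilon j\rfloor$ hits every integer within a bounded gap $\le \lceil 1/\varepsilon\rceil$. Absorbing a bounded power of $B$, which changes $\|y\|$ by a bounded factor, we may assume $i = i(j)$ exactly. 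Then $A^{-j}x = A^{-j}B^{i(j)}y$ has norm at most $MC$, hence $\rho_A(A^{-j}x) \lesssim 1$ by the upper estimate relating $\|\cdot\|$ and $\rho_A$. Therefore
\[
\rho_A(x) \lesssim a^{j} \asymp b^{i} \asymp \rho_B(x).
\]
For the reverse inequality we need a bound on $\|B^{-i(j)}A^{j}\|$. The lower bound is obtained by applying the same argument with the roles of $A$ and $B$ exchanged, and for this we must first derive $\sup_j\|B^{-i(j)}A^{j}\|<\infty$ from the hypothesis. This is the main obstacle. The plan is a determinant/volume argument. The matrix $C_j := A^{-j}B^{i(j)}$ satisfies $\|C_j\| \le M$ and $|\det C_j| = b^{i(j)}/a^{j} \in [b^{-1}, 1]$. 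For any matrix, $\|C^{-1}\| \le \|C\|^{d-1}/|\det C|$ (by the adjugate formula, or via singular values: the smallest singular value is at least $|\det C|/\|C\|^{d-1}$). Hence $\|C_j^{-1}\| \le b M^{d-1}$. Thus both directions hold, and the pointwise comparison yields $\rho_A \asymp \rho_B$.

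\emph{($\Rightarrow$)} Assume $\rho_A \asymp \rho_B$. Take $y$ with $\|y\| \le 1$; then $\rho_B(y) \lesssim 1$ uniformly. We compute
\[
\rho_A(A^{-j}B^{i(j)}y) = a^{-j}\rho_A(B^{i(j)}y) \asymp a^{-j}\rho_B(B^{i(j)}y) = a^{-j}b^{i(j)}\rho_B(y) \lesssim 1.
\]
So $A^{-j}B^{i(j)}$ maps the Euclidean unit ball into a fixed $\rho_A$-ball, which is Euclidean-bounded. This gives $\|A^{-j}B^{i(j)}\| \le C$ for all $j$.

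The only delicate points are the bookkeeping of the floor function, namely that consecutive values of $i(j)$ differ by at most $\lceil\varepsilon\rceil$ and every integer lies within bounded distance of the range of $i$, and the inversion step via the determinant bound. Both are handled above.
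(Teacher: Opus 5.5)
Your proof is correct and complete. The paper does not prove this lemma itself; it is quoted from \cite[Lemma 10.2]{bownik2003anisotropic}, so there is no in-paper argument to compare against. Your route is the natural self-contained proof. It has three ingredients. First, you match scales via $|\det B|^{\lfloor\varepsilon j\rfloor} \le |\det A|^{j} < |\det B|^{\lfloor\varepsilon j\rfloor+1}$. Second, you use that $\rho_A$ is bounded on Euclidean-bounded sets and that $\rho_A$-bounded sets are Euclidean-bounded. Third, you pass from $C_j = A^{-j}B^{\lfloor\varepsilon j\rfloor}$ to $C_j^{-1}$ through $\|C^{-1}\|\le \|C\|^{d-1}/|\det C|$ together with $|\det C_j|\in(|\det B|^{-1},1]$. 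The determinant step is exactly what is needed, since the hypothesis controls only one of the two products.

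Only cosmetic points remain.
\begin{itemize}
\item Consecutive values of $\lfloor\varepsilon j\rfloor$ differ by at most $\lceil\varepsilon\rceil$, not $\lceil 1/\varepsilon\rceil$. You state this correctly at the end.
\item For $\rho_B\lesssim\rho_A$ you need not literally swap $A$ and $B$, which would call for $\lfloor j/\varepsilon\rfloor$. With $\sup_j\|C_j^{-1}\|<\infty$ in hand, pick $j$ with $|\det A|^j\le\rho_A(x)<|\det A|^{j+1}$. Then $\rho_B(x)\lesssim|\det B|^{\lfloor\varepsilon j\rfloor}\le|\det A|^j\le\rho_A(x)$ follows directly, with no floor bookkeeping.
\item The reduction to a step quasi-norm is never used and can be dropped.
\end{itemize}
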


We mention that if $A, B \in \mathrm{GL}(d, \mathbb{R})$ are expansive matrices having only positive eigenvalues and satisfy $\det(A) = \det (B)$, then $A$ and $B$ are equivalent precisely whenever $A=B$, cf. \cite[Theorem 7.9]{cheshmavar2020classification}. 

Euclidean space $\mathbb{R}^d$ equipped with a homogeneous quasi-norm $\rho_A$ and Lebesgue measure $\mu$ forms a space of homogeneous type in the sense of Coifman-Weiss \cite{coifman1977extensions}. 
Given a homogeneous quasi-norm $\rho_A$,  we define the associated $\rho_A$-ball by
\[
B_{\qn}(x,r)= \big\{ y\in\R^d: \qn(x-y)<r \big \}, \qquad x\in\R^d,\ r>0,
\]
and let $\mathcal{B}$ denote the collection of all such balls. 
Let $\mathcal Q$ be the collection of all {\it dilated cubes}
\[
{ \mathcal Q} = \big\{Q_{j,k} :=A^j([0,1]^d+k): j\in\Z,\ k\in\Z^d \big\}
\]
adapted to the
action of a dilation $A$. Obviously, if $A=2 \cdot I_d$, then we obtain the usual collection of {\it dyadic cubes}. Let
\[
x_Q=A^j k, \qquad Q=A^j([0,1]^d+k) \in\mathcal Q,
\]
be the ``lower-left corner'' of $Q$. 
The scale of a ball $B=B_{\qn}(x_0,r)\in \mathcal B$ is defined as 
\[
\operatorname{scale}(B) = \lfloor \log_{|\det(A)|} r\rfloor.
\]
The scale of a dilated cube $Q=A^j([0,1]^d+k) \in \mathcal Q$ is defined as  $\operatorname{scale}(Q)=j$. Alternatively, $\operatorname{scale}(Q) = \log_{|\det(A)|}|Q|$.

\section{Function spaces} \label{sec:function}
We follow the definition of the anisotropic Triebel-Lizorkin function spaces in \cite{bownik2006atomic,  bownik2007anisotropic}.

For an expansive $A \in \mathrm{GL}(d, \mathbb{R})$, let $\varphi \in \mathcal{S}(\mathbb{R}^d)$ be a Schwartz function with Fourier transform $\widehat{\varphi} \in C_c^{\infty} (\mathbb{R}^d \setminus \{0\})$ satisfying
\[
 \sup_{j \in \mathbb{Z}} |\widehat{\varphi} ((A^t)^j \xi)| > 0 \quad \text{for all} \quad \xi \in \mathbb{R}^d \setminus \{0\}
\]
and let $\varphi_j := |\det(A)|^j \varphi (A^j \cdot)$ for $j \in \mathbb{Z}$.
For $\alpha \in \mathbb{R}$ and $p,q \in (0, \infty]$, the associated (homogeneous) anisotropic Triebel-Lizorkin space $\TLfA$ is defined as the space of all tempered distributions $f \in \mathcal{S}'(\mathbb{R}^d)  / \mathcal{P}$ (modulo polynomials) satisfying
\[
\| f \|_{\TLfA} := \bigg\| \bigg( \sum_{j \in \mathbb{Z}} \big( |\det(A)|^{\alpha j} |f \ast \varphi_j | \big)^q \bigg)^{1/q} \bigg\|_{L^p} < \infty
\]
whenever $p \in (0, \infty)$ (with the usual modification for $q = \infty$), and
\[
 \| f \|_{\TLfAi} := \sup_{\ell \in \mathbb{Z}, k \in \mathbb{Z}^d} \bigg(
 \frac{1}{|\det(A)|^{\ell}} \int_{A^{\ell} ([0,1]^d+k)} \sum_{j = \ell}^{\infty} \big(|\det(A)|^{\alpha j} |(f \ast \varphi_j) (x)|)^q \; dx \bigg)^{1/q} < \infty
\]
or $\| f \|_{\TLfAii} := \sup_{j \in \mathbb{Z}} |\det(A)|^{\alpha j} \| f \ast \varphi_j \|_{L^{\infty}} < \infty$ whenever $p = \infty$. The space $\TLfA$ is complete with respect to the quasi-norm $\| \cdot \|_{\TLfA}$ and independent of the choice of the defining vector $\varphi$, with equivalent quasi-norms for different choices, cf. \cite{bownik2006atomic,  bownik2007anisotropic}.

The scale of anistropic Triebel-Lizorkin spaces $\TLfA$ contains, among others, the anisotropic Hardy spaces introduced in \cite{bownik2003anisotropic}. For $p \in (0, 1]$, the anisotropic Hardy space is defined as
\[
 H^p (A) := \bigg\{ f \in \mathcal{S}' (\mathbb{R}^d) : M_{\phi}^0 f \in L^p (\mathbb{R}^d) \bigg\},
\]
where $\phi \in \mathcal{S}(\mathbb{R}^d)$ is a fixed function with $\int_{\mathbb{R}^d} \phi(x) \;dx \neq 0$, and
\[
 M_{\phi}^0 f (x) = \sup_{j \in \mathbb{Z}} |f \ast \phi_j (x)|, \quad x \in \mathbb{R}^d,
\]
is the \emph{radial maximal function} of $f$ associated to $\phi$. See \cite[Chapter 1]{bownik2003anisotropic} for further details and various characterizations.

In \cite[Chapter 1, Section 10]{bownik2003anisotropic}, the following classification of dilations $A$ yielding the same anisotropic Hardy space $H^p (A)$ was proven, cf. \cite[Theorem 10.5]{bownik2003anisotropic}.

\begin{thm}[\cite{bownik2003anisotropic}]
Let $A, B \in \mathrm{GL}(d, \mathbb{R})$ be expansive. The following are equivalent:
\begin{enumerate}[(i)]
 \item $A$ and $B$ are equivalent.
 \item $H^p(A) = H^p(B)$ for some $p \in (0, 1]$.
 \item $H^p(A) = H^p(B)$ for all $p \in (0, 1]$.
\end{enumerate}
\end{thm}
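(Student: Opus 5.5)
The plan is to prove (i)$\Rightarrow$(iii)$\Rightarrow$(ii)$\Rightarrow$(i); the middle implication is trivial.

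\emph{(i)$\Rightarrow$(iii).} Suppose $\rho_A \asymp \rho_B$. I would not compare radial maximal functions directly, since $\{\phi_j^A\}$ and $\{\phi_j^B\}$ live on different dilation grids. Instead I would use the grand maximal function characterization of $H^p(A)$ from \cite[Chapter 1]{bownik2003anisotropic}. Recall
\[
M_N f(x)=\sup_{\phi\in\mathcal S_N}\sup_{t\in\Z}\sup_{y\in x+B_{\rho_A}(0,|\det A|^t)}|f*\phi_t(y)|,
\]
and that $f\in H^p(A)$ if and only if $M_N f\in L^p$ for $N$ large. A nontangential maximal function over $\rho_A$-balls, taken with a suitably rich family of test functions, is comparable to the analogous object built with $\rho_B$-balls. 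The reason is as follows. Because $\rho_A\asymp\rho_B$, a $\rho_A$-ball of radius $r$ is sandwiched between $\rho_B$-balls of radii $c^{\pm1}r'$ with comparable volume. Lemma \ref{lem:equivalent} then gives $\sup_j\|A^{-j}B^{\lfloor\varepsilon j\rfloor}\|<\infty$, and applying it with the roles of $A$ and $B$ exchanged gives the inverse bound, so the matrices $A^{-j}B^{\lfloor \varepsilon j\rfloor}$ lie in a compact subset of $\mathrm{GL}(d,\R)$. Consequently $\phi(B^{\lfloor\varepsilon j\rfloor}\cdot)$ equals $\psi(A^{j}\cdot)$ with $\psi=\phi\circ(A^{-j}B^{\lfloor\varepsilon j\rfloor})$, up to the determinant normalization. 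The Schwartz seminorms of $\psi$ are bounded uniformly in $j$, which keeps $\psi$ in a fixed multiple of $\mathcal S_{N'}$. Hence $M^B_{N}f\lesssim M^A_{N'}f$, the reverse inequality holds by symmetry, and the $H^p$ quasi-norms are equivalent for every $p\in(0,1]$.

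\emph{(ii)$\Rightarrow$(i).} This is the main obstacle. I would argue by contraposition: if $\sup_j\|A^{-j}B^{\lfloor\varepsilon j\rfloor}\|=\infty$, construct $f$ in one space but not the other. If the two spaces coincide as sets, then the closed graph theorem, applied to these complete quasi-normed spaces, makes the norms equivalent. So it suffices to produce a sequence $f_n$ with $\|f_n\|_{H^p(A)}\lesssim 1$ but $\|f_n\|_{H^p(B)}\to\infty$. The natural candidates are $(p,\infty,s)$-atoms for $A$ supported on $\rho_A$-balls $B_n$ chosen so that $B_n$ is very elongated relative to $\rho_B$-balls of the same volume; such balls exist because the quasi-norms are not equivalent.

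An $A$-atom $a$ satisfies $\|a\|_{H^p(A)}\le C$. To bound $\|a\|_{H^p(B)}$ from below, I would estimate the $B$-radial maximal function. Concretely, take $a=|B_n|^{-1/p}\,\eta_n$ with $\eta_n$ a smooth bump adapted to $B_n$ having vanishing moments. On the region where $B_n$ is thin, a $B$-dilate $\phi_j^B$ of a comparable volume scale sees only part of the bump, so cancellation fails. This should give a lower bound $M^0_\phi a\gtrsim |B_n|^{-1/p}$ on a set of measure $\gg|B_n|$, namely the $\rho_B$-enlargement in the long direction. The resulting $L^p$ norm then blows up as the eccentricity grows.

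Making this lower bound rigorous is the delicate part. I expect to need a careful choice of the scale $j$ and of the point $x$ so that $\int a(y)\phi_j^B(x-y)\,dy$ is bounded below, plus a geometric lemma that non-equivalence yields balls whose eccentricity ratio is unbounded along a sequence. I would first try the case of large $|x|$ in the $\rho_B$ metric, using the polynomial tail decay of $M^0_\phi a$ along the direction of slowest $A$-contraction compared with $B$-contraction.
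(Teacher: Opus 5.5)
The paper does not prove this theorem; it quotes it from \cite{bownik2003anisotropic}, so your proposal has to stand on its own. Your (i)$\Rightarrow$(iii) is a sound, standard argument, with one slip. The identity $\phi(B^{\lfloor\varepsilon j\rfloor}x)=\psi(A^{j}x)$ forces $\psi=\phi\circ(B^{\lfloor\varepsilon j\rfloor}A^{-j})$, not $\phi\circ(A^{-j}B^{\lfloor\varepsilon j\rfloor})$, and these products need not agree. The slip is harmless. Lemma \ref{lem:equivalent} covers all $j\in\Z$, and $A^{j}B^{-\lfloor\varepsilon j\rfloor}=A^{-j'}B^{\lfloor\varepsilon j'\rfloor}B^{\lceil\varepsilon j\rceil-\lfloor\varepsilon j\rfloor}$ with $j'=-j$. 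Hence $\{B^{\lfloor\varepsilon j\rfloor}A^{-j}\}_{j\in\Z}$ is also relatively compact in $\mathrm{GL}(d,\R)$. You should also note that every $B$-scale lies within bounded distance of some $\lfloor\varepsilon j\rfloor$.

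The genuine gap is (ii)$\Rightarrow$(i). Its decisive step is only announced, and the mechanism you describe does not work as stated.

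\emph{The stated bound is impossible.} The claim ``$M^0_\phi a\gtrsim|B_n|^{-1/p}$ on a set of measure $\gg|B_n|$'' cannot hold. Pointwise $M^0_\phi a\lesssim M_{\rho_B}a$, and $M_{\rho_B}$ is of weak type $(1,1)$. Hence $|\{M^0_\phi a>\delta|B_n|^{-1/p}\}|\lesssim\delta^{-1}|B_n|^{1/p}\|a\|_{L^1}\le\delta^{-1}|B_n|$ for every $A$-atom, whatever the shape of $B_n$. On a region of measure $\approx K|B_n|$ the maximal function can be bounded below uniformly by at most $\approx K^{-1}|B_n|^{-1/p}$. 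That region contributes $\approx K^{1-p}$ to $\|M^0_\phi a\|_{L^p}^p$, which diverges for $p<1$ but is $O(1)$ at $p=1$. At $p=1$, divergence can only come from summing about $\log K$ such regions at different scales; your sketch does not account for this.

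\emph{Elongation alone does not force blow-up; the cancellation structure of the atom does.} Take $A=2I_2$ and $B=\diag(2,8)$, which are not equivalent. Let $a_r(x)=r^{-2/p}\eta(x_1/r)\zeta(x_2/r)$ with $\eta,\zeta\in C_c^\infty(-\tfrac12,\tfrac12)$ having vanishing moments up to order $s$. These are multiples of $A$-atoms on discs that become arbitrarily elongated relative to $\rho_B$-balls of equal volume. Yet with $\phi=\phi_1\otimes\phi_2$ one gets $M^0_\phi a_r(x)\lesssim r^{-2/p}(1+|x_1|/r)^{-s-2}(1+|x_2|/r)^{-s-2}$. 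So $\sup_r\|a_r\|_{H^p(B)}<\infty$ once $(s+2)p>1$. Radial atoms, whose integrals along vertical lines do not vanish, do blow up. A proof along your lines must therefore pick atoms whose integrals over the directions in which $\rho_B$-balls outgrow $B_n$ stay bounded below. It must also locate those directions and scales for general non-diagonal $A,B$, including Jordan blocks and rotations. That is exactly the part you defer to ``making this rigorous''.

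For $p<1$ you can bypass this geometry. Fix $f\in\mathcal S(\R^d)$ with $\widehat f\in C_c^\infty(\R^d\setminus\{0\})$ nonvanishing on a Euclidean annulus, and set $g_j:=f\circ(A^{j}B^{-\lfloor\varepsilon j\rfloor})$. The closed graph theorem and dilation invariance give $\sup_j\|g_j\|_{H^p(B)}<\infty$. The atomic estimate $|\widehat g(\xi)|\lesssim\|g\|_{H^p(B)}\,\rho_{B^{T}}(\xi)^{1/p-1}$ then forces $\sup_j\|B^{\lfloor\varepsilon j\rfloor}A^{-j}\|<\infty$. Together with the symmetric bound, the $j\mapsto -j$ identity above, and Lemma \ref{lem:equivalent}, this yields (i). At $p=1$ the exponent $1/p-1$ vanishes, so this estimate gives nothing and a separate, logarithmic argument is still needed.
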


The anisotropic Hardy spaces $H^p(A)$ can be identified with the anisotropic Triebel-Lizorkin spaces $\dot{\mathbf{F}}^0_{p, 2} (A)$ for $p \in (0, 1]$. For $p \in (1, \infty)$, the space $\dot{\mathbf{F}}^0_{p, 2} (A)$ can be identified with the usual Lebesgue space $L^p (\mathbb{R}^d)$, and thus does not depend on the choice of $A$. It turns out that this is the only range of Triebel-Lizorkin spaces that is independent of the dilation. The following classification theorem was shown in \cite{koppensteiner2024classification}.

\begin{thm}[\cite{koppensteiner2024classification}]
Let $A, B \in \mathrm{GL}(d, \mathbb{R})$ be expansive. The following are equivalent:
\begin{enumerate}
 \item $A$ and $B$ are equivalent.
 \item $\TLfA = \TLfB$ for some $(\alpha, p, q) \in \mathbb{R} \times (0, \infty]^2$ with $(\alpha, p, q) \notin \{0\} \times (1, \infty) \times \{2\}$.
 \item  $\TLfA = \TLfB$ for all $(\alpha, p, q) \in \mathbb{R} \times (0, \infty]^2$.
\end{enumerate}
\end{thm}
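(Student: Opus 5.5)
The plan is to prove the cycle (1)$\Rightarrow$(3)$\Rightarrow$(2)$\Rightarrow$(1). The middle implication is trivial: pick any admissible triple outside the exceptional set $\{0\}\times(1,\infty)\times\{2\}$.

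\emph{Step 1: (1)$\Rightarrow$(3).} Assume $\rho_A\asymp\rho_B$. Put $\varepsilon=\ln|\det A|/\ln|\det B|$, so that $|\det B|^{\lfloor\varepsilon j\rfloor}\asymp|\det A|^{j}$. The goal is to compare the Littlewood--Paley pieces of the two spaces.
\begin{itemize}
\item By Lemma \ref{lem:equivalent}, $\sup_j\|A^{-j}B^{\lfloor\varepsilon j\rfloor}\|<\infty$, and symmetrically with the roles of $A$ and $B$ exchanged.
\item Fix $\varphi$ for $A$ and $\psi$ for $B$. Then $\widehat{\psi}((B^t)^{i}\cdot)$ is supported in an annulus of the quasi-norm $\rho_{B^t}\asymp\rho_{A^t}$. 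Hence it overlaps only boundedly many annuli $\{\widehat{\varphi}((A^t)^j\cdot)\}$, namely those with $|j-i/\varepsilon|\le N$ for a fixed $N$.
\item Write $\psi_i*f=\sum_{|j-i/\varepsilon|\le N}\psi_i*\eta_j*\varphi_j*f$, where $\eta$ is a reproducing companion of $\varphi$.
\item Since $A^{-j}B^{i}$ ranges over a bounded set of matrices, the kernels $|\det A|^{-j}(\psi_i*\eta_j)(A^{-j}\cdot)$ form a uniformly bounded family of Schwartz functions. Therefore $|\psi_i*f|$ is dominated by Peetre-type maximal functions $\varphi_j^{*,a}f$ at comparable scales.
\item The $A$-norm is equivalent to the one defined with Peetre maximal functions (Fefferman--Stein vector-valued inequality in the space of homogeneous type $(\R^d,\rho_A)$). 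Using $|\det B|^{\alpha i}\asymp|\det A|^{\alpha j}$, this gives $\|f\|_{\TLfB}\lesssim\|f\|_{\TLfA}$.
\item For $p=\infty$ the same works: $B$-dilated cubes are covered by boundedly many $A$-cubes of comparable measure, since the balls of $\rho_A$ and $\rho_B$ are comparable. Symmetry gives the reverse inequality.
\end{itemize}

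\emph{Step 2: (2)$\Rightarrow$(1).} This is the hard direction and the main obstacle. Suppose $\TLfA=\TLfB$ for a non-exceptional triple. By the closed graph theorem (both spaces are complete quasi-Banach spaces continuously embedded in $\mathcal S'/\mathcal P$), the quasi-norms are equivalent. I will test this equivalence against wave packets $f_{j,\xi}$ with $\widehat{f}$ a fixed bump transported by $(A^t)^{j}$ to a small ball around a frequency $\xi$.
\begin{itemize}
\item The $A$-norm of such a function is explicitly computable: about $|\det A|^{j(\alpha+1/2-1/p)}$ after normalization.
\item If $\rho_A\not\asymp\rho_B$, Lemma \ref{lem:equivalent} yields scales $j$ with $\|A^{-j}B^{\lfloor\varepsilon j\rfloor}\|\to\infty$. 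Then the $A$-adapted frequency ellipsoid is very badly shaped relative to the $B$-annuli.
\item There are two cases. If $\alpha\ne0$, I would choose the packets so that the $B$-scales they meet have $|\det B|^{i}$ much larger or smaller than $|\det A|^{j}$; the factor $|\det|^{\alpha\cdot}$ then blows up the ratio of norms. If $\alpha=0$ but $p\le1$, $p=\infty$ or $q\ne2$, I would use a sum of $M$ packets that are one scale for $B$ but split across many scales for $A$, or the reverse. The $\ell^q$ sum against the $\ell^2$/$L^p$ behaviour then yields a factor like $M^{1/q-1/2}$, or a Hardy-space obstruction for $p\le1$. This reduces to the known Hardy case, the classification theorem for $H^p(A)$ above.
\end{itemize}

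The delicate part is producing, from mere non-boundedness of $A^{-j}B^{\lfloor\varepsilon j\rfloor}$, frequency configurations where the numbers of overlapping annuli differ unboundedly. I would try first to extract a subsequence of directions where $\rho_{A^t}$ and $\rho_{B^t}$ grow at different rates, and place packets along that direction.
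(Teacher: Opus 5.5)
The paper does not prove this theorem; it only quotes it from \cite{koppensteiner2024classification}. Your proposal therefore has to stand on its own.

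\textbf{Step 1.} This is the standard Littlewood--Paley comparison and is essentially fine. The one unproved claim is $\rho_{A^t}\asymp\rho_{B^t}$. By Lemma \ref{lem:equivalent} applied to $A^t,B^t$, this is boundedness of $B^{\lfloor\varepsilon j\rfloor}A^{-j}$, not of $A^{-j}B^{\lfloor\varepsilon j\rfloor}$. It follows from the lemma with the roles of $A$ and $B$ exchanged and the index replaced by its negative, but you should say so.

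\textbf{Step 2, main gap.} The gap is exactly the step you call delicate. You never show that non-equivalence produces a single annulus of one family meeting unboundedly many annuli of the other. The route you suggest cannot give it. Along a fixed ray, each quasi-norm satisfies $t^{\gamma_1}\lesssim\rho(tx)/\rho(x)\lesssim t^{\gamma_2}$ for $t\ge1$, with $0<\gamma_1\le\gamma_2$. Hence the part of a ray inside one $B^t$-annulus meets only boundedly many $A^t$-annuli. Packets placed along one direction are spread over comparably many scales of both kinds, and no $\ell^q$-versus-$\ell^2$ discrepancy appears.

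What you need is oscillation of $h=\rho_{A^t}/\rho_{B^t}$ across \emph{directions} inside one annulus. One way to get it: for every $\Omega_i=\{\xi:|\det B|^i\le\rho_{B^t}(\xi)<|\det B|^{i+1}\}$, show $\inf_{\Omega_i}h\lesssim1\lesssim\sup_{\Omega_i}h$ uniformly in $i$.
\begin{itemize}
\item The upper bound holds because $|\Omega_i|\asymp|\det B|^i$ cannot fit into an $A^t$-ball of much smaller measure.
\item For the lower bound, suppose $h\ge H$ on $\Omega_i$. Then the $A^t$-ball of radius $H|\det B|^i$ avoids $\Omega_i$. This ball is convex for the ellipsoidal step quasi-norm and contains $0$. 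For large $H$ it has too much measure to lie in the $B^t$-ball of radius $|\det B|^{i+1}$, so it must cross $\Omega_i$, a contradiction.
\end{itemize}
Since $\rho_A\not\asymp\rho_B$ means $\sup h=\infty$ or $\inf h=0$, the annulus through an extreme point of $h$ meets $\gtrsim|\log h|$ annuli of the other family.

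\textbf{Step 2, further gaps.}
\begin{itemize}
\item \emph{The factor $M^{1/q-1/2}$.} It presupposes that the $L^p$-norm of a sum of $M$ packets in one annulus is $\asymp M^{1/2}$ times that of one packet. This is not automatic: frequencies in arithmetic progression give Dirichlet-kernel behaviour. It holds on average over random signs (Khintchine) or for suitably independent frequencies. You also need uniform control of the Littlewood--Paley multipliers on Fourier supports that shrink with $M$, including for $p<1$.
\item \emph{The triple $(\alpha,p,q)=(0,\infty,2)$.} This is anisotropic BMO and lies in the non-exceptional range, but neither of your mechanisms covers it. Here $M^{1/q-1/2}=1$, and your Hardy reduction applies only for $p\le1$. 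One option is duality $\dot{\mathbf{F}}^{0}_{\infty,2}(A)=(H^1(A))^*$ on a common dense test class, which reduces to the $H^1$ classification.
\item \emph{Other cases with $p=\infty$.} The norm is Carleson-type, and single-annulus functions behave like $\|f\|_{L^\infty}$. No two-sided Khintchine bound is available there, so the $q\ne2$ argument must be redone.
\item \emph{The case $\alpha\ne0$.} An $A^t$-adapted packet can meet many $B^t$-annuli, so its $B$-norm is not simply $|\det B|^{\alpha i}\|f\|_{L^p}$. Take Fourier supports small enough to lie in boundedly many annuli of both families, with uniform constants by rescaling. With that fix the $\alpha\ne0$ case works.
\end{itemize}
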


A classification of \emph{inhomogeneous} anisotropic Triebel-Lizorkin spaces was obtained in \cite{velthoven2024classification}.

\section{Sequence spaces} \label{sec:sequence}
Let $A \in \mathrm{GL}(d, \mathbb{R})$ be expansive.
For $\alpha \in \R$ and $p,q\in (0,\infty]$, the (homogeneous) anisotropic discrete Triebel-Lizorkin sequence space
$\dot {\mathbf f}^\alpha_{p,q}(A)$  is defined as the space of all $c\in \mathbb C^{\Z \times \Z^d}$ satisfying
\[
||c||_{\dot {\mathbf f}^\alpha_{p,q}(A)}:= \bigg\| \bigg( \sum_{j\in \Z} \sum_{k\in \Z^d} ( |\det(A)|^{-j(\alpha+1/2)}|c_{j,k}| \1_{A^j ([0, 1]^d + k)})^q \bigg)^{1/q} \bigg\|_{L^p} <\infty
\]
whenever $p < \infty$ (with the usual modification for $q = \infty)$, and
\[
 \| c \|_{\TLAi} := \sup_{P \in \mathcal{Q}} \bigg(\frac{1}{|P|} \int_P \sum_{\substack{ j \in \mathbb{Z} \\ j \leq \operatorname{scale}(P)}} \sum_{k \in \mathbb{Z}^d} \big( |\det(A)|^{-j (\alpha+1/2)} |c_{j,k}| \1_{A^j ([0,1]^d + k)} (x) \big)^q \; dx \bigg)^{1/q} < \infty,
\]
where the case $q=\infty$ has to be interpreted as
$ \sup_{j \in \mathbb{Z}, k \in \mathbb{Z}^d} |\det(A)|^{-j (\alpha + 1/2)} |c_{j,k} | < \infty.
$

The following result \cite[Corolary 3.5]{bownik2008duality} bridges a gap in the definition of $\TLsA$ spaces between the generic case $p<\infty$ and the special case $p=\infty$.

\begin{thm}\label{pinf}
Suppose that $\alpha \in \R$ and $0<p,q \le \infty$. Fix $0<\eta<1$. Then, for any sequence $c\in \mathbb C^{\Z \times \Z^d}$
\[
||c||_{\TLsA} \asymp \inf \bigg\| \bigg( \sum_{j\in \Z} \sum_{k\in \Z^d} ( |\det(A)|^{-j(\alpha+1/2)}|c_{j,k}| \1_{E_{j,k}})^q \bigg)^{1/q} \bigg\|_{L^p} 
\]
where infimum is taken over all Lebesgue measurable sets $E_{j,k} \subset A^j ([0, 1]^d + k)$ satisfying  $|E_{j,k}| > \eta|\det A|^j$ for $j\in \Z$, $k\in \Z^d$.
\end{thm}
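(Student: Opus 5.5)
The direction ``$\gtrsim$'' is immediate: the choice $E_{j,k} = A^j([0,1]^d+k)$ is admissible, because $|A^j([0,1]^d+k)| = |\det A|^j > \eta |\det A|^j$. So the infimum is at most $\|c\|_{\TLsA}$, and this holds for every $p \in (0,\infty]$. All the work is in the reverse inequality. Write $a_Q = |\det A|^{-j(\alpha+1/2)}|c_{j,k}|$ for $Q = Q_{j,k}$, and fix admissible sets $E_Q \subset Q$. I will first prove
\[
\|c\|_{\TLsA} \lesssim \Big\| \Big(\sum_Q (a_Q \1_{E_Q})^q\Big)^{1/q}\Big\|_{L^p}
\]
with a constant depending only on $A,p,q,\eta$, and then take the infimum over the $E_Q$.

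\emph{Case $p<\infty$.} Let $M$ be the Hardy--Littlewood maximal operator on the space of homogeneous type $(\R^d,\qn,|\cdot|)$, taken over the $\qn$-balls in $\mathcal B$. A dilated cube $Q$ of scale $j$ is contained in a ball $B_{\qn}(x,C|\det A|^j)$ around any of its points $x$, and this ball has measure $\asymp |Q|$. Hence, for $x \in Q$,
\[
M(\1_{E_Q})(x) \ge \frac{|E_Q|}{|B_{\qn}(x,C|\det A|^j)|} \ge c_0\eta ,
\]
which gives the pointwise bound $\1_Q \le (c_0\eta)^{-1/r}\,(M\1_{E_Q})^{1/r}$ for any $r>0$. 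Fix $0<r<\min(p,q)$. Since $a_Q^r\1_{E_Q}$ has maximal function $a_Q^r M\1_{E_Q}$, we get
\[
\Big(\sum_Q (a_Q\1_Q)^q\Big)^{1/q} \lesssim \Big(\sum_Q \big(M(a_Q^r\1_{E_Q})\big)^{q/r}\Big)^{r/q\cdot 1/r}.
\]
Taking $L^p$ norms, the left side becomes the $L^{p/r}(\ell^{q/r})$ norm of $\{M(a_Q^r\1_{E_Q})\}$, raised to the power $1/r$. The Fefferman--Stein vector-valued maximal inequality holds on spaces of homogeneous type for exponents $p/r,q/r \in (1,\infty]$, with $q/r=\infty$ allowed. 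Applying it removes $M$, and what remains is exactly $\|(\sum_Q(a_Q\1_{E_Q})^q)^{1/q}\|_{L^p}$. The case $q=\infty$ is the same argument with the sums replaced by suprema.

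\emph{Case $p=\infty$, $q<\infty$.} This is the main obstacle, because it has to be localized. Dilated cubes are not nested for a general $A$, since $A\Z^d \not\subset \Z^d$. So I will not argue that the cubes of scale $\le \operatorname{scale}(P)$ meeting $P$ lie inside $P$. Instead, fix $P\in\mathcal Q$ of scale $\ell$. Every $Q$ with $\operatorname{scale}(Q)\le \ell$ and $Q\cap P\neq\emptyset$ lies in an enlargement $P^*$. This set $P^*$ is a $\qn$-ball of measure $\lesssim |P|$, and it is covered by a bounded number $N=N(A)$ of dilated cubes $P'_1,\dots,P'_N$ of scale $\ell+m$, where $m$ is a fixed integer.

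Now apply the already proved $p<\infty$ estimate, with $p=q$ and a suitable $r<q$, to the truncated sequence consisting only of those $Q$. This gives
\[
\frac1{|P|}\int_P \sum_{\operatorname{scale}(Q)\le\ell}(a_Q\1_Q)^q \lesssim \frac1{|P|}\int_{P^*}\sum_{Q\subset P^*,\ \operatorname{scale}(Q)\le \ell}(a_Q\1_{E_Q})^q \le \sum_{i=1}^N \frac{|\det A|^m}{|P'_i|}\int_{P'_i}\sum_{\operatorname{scale}(Q)\le \ell+m}(a_Q\1_{E_Q})^q .
\]
Each term on the right is bounded by the $p=\infty$ functional built from the sets $E_Q$, because $\operatorname{scale}(P'_i)=\ell+m$. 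Taking the supremum over $P$ gives the claim. For $q=\infty$ with $p=\infty$ there is nothing to prove: the quantity is $\sup_Q a_Q$, and since each $E_Q$ is nonempty, $\sup_x \sup_Q a_Q\1_{E_Q}(x) = \sup_Q a_Q$ as well.

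I expect the delicate points to be the uniform geometric constants in the $p=\infty$ case: the size of the enlargement $P^*$, the number $N$ of covering cubes, and the shift $m$, all independent of $P$. I will obtain these from the doubling property of $\qn$-balls together with the comparison between cubes of scale $j$ and balls of radius $\asymp|\det A|^j$.
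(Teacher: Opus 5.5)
The paper does not prove this statement; it quotes it from \cite[Corollary 3.5]{bownik2008duality}, so your proposal has to stand on its own. Your argument for $p<\infty$ is correct and is the standard one: the bound $M\1_{E_Q}\gtrsim\eta$ on $Q$, followed by Fefferman--Stein in $L^{p/r}(\ell^{q/r})$.

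The gap is at $p=\infty$, $0<q<\infty$. There the right-hand side is the $L^\infty$ norm of $G_E:=(\sum_Q(a_Q\1_{E_Q})^q)^{1/q}$, not a Carleson-type functional built from the sets $E_Q$. You read it this way yourself in the case $q=\infty$, and it is this reading that lets the theorem ``bridge'' the definition of $\TLAi$ with the $L^p$-type definition.

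With this reading your ``immediate'' direction fails. The choice $E_Q=Q$ only gives $\inf\le\|(\sum_Q(a_Q\1_Q)^q)^{1/q}\|_{L^\infty}$, and that quantity is not controlled by $\|c\|_{\TLAi}$. For example, take $d=1$, $A=2$, and $a_Q=1$ exactly when $Q=[0,2^{-n}]$ with $n\ge0$, and $a_Q=0$ otherwise. Then $\|c\|_{\TLAi}=2^{1/q}$, whereas $\sum_Q a_Q^q\1_Q(x)\ge\log_2(1/x)$ on $(0,1)$ is unbounded.

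So the inequality $\inf_E\|G_E\|_{L^\infty}\lesssim\|c\|_{\TLAi}$, which is the real content of the theorem at $p=\infty$, is not proved. Your localization argument only gives the opposite, easy direction $\|c\|_{\TLAi}\lesssim\|G_E\|_{L^\infty}$. That direction needs no covering by the cubes $P_i'$: the bounds $a_Q^q|Q|\le\eta^{-1}a_Q^q|E_Q|$ and $|P^*|\lesssim|P|$ already suffice.

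What is missing is a construction of good sets, and it is short.
\begin{itemize}
\item Put $F_j(x):=\sum_{i\le j}\sum_{k\in\Z^d}(a_{i,k}\1_{Q_{i,k}}(x))^q$. This is nondecreasing in $j$, and the definition of $\TLAi$ with $P=Q_{j,k}$ gives $\int_{Q_{j,k}}F_j\le\|c\|_{\TLAi}^q\,|Q_{j,k}|$.
\item Fix $K>(1-\eta)^{-1}$ and set $E_{j,k}:=\{x\in Q_{j,k}: F_j(x)\le K\|c\|_{\TLAi}^q\}$. By Chebyshev, $|E_{j,k}|\ge(1-K^{-1})|\det A|^j>\eta|\det A|^j$.
\item Since $E_{i,k'}\subset Q_{i,k'}$, the partial sum $\sum_{i\le j}\sum_{k'}(a_{i,k'}\1_{E_{i,k'}}(x))^q$ is at most $F_j(x)$. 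It is therefore at most $K\|c\|_{\TLAi}^q$ whenever $x\in E_{j,k}$ for some $k$.
\item Let $j$ run upward through the scales at which $x$ lies in some $E_{j,k}$; the other scales contribute nothing. This gives $\|G_E\|_{L^\infty}\le K^{1/q}\|c\|_{\TLAi}$.
\end{itemize}
This needs no nestedness of the dilated cubes, because the $\TLAi$ functional integrates over $P$ the contributions of \emph{all} cubes of scale at most $\operatorname{scale}(P)$.

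A minor point: in your case $q=\infty$, what matters is that $|E_Q|>0$, not merely that $E_Q\neq\emptyset$, since the norm is an essential supremum.
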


It is readily verified from the definitions of the quasi-norms defining Triebel-Lizorkin sequences spaces that if $A, B \in \mathrm{GL}(d, \mathbb{R})$ are expansive and $\alpha \in \mathbb{R}$, $p, q \in (0, \infty]$ are such that $p=q$ and
\[
 |\det(A)|^{\alpha + 1/2 - 1/p} = |\det(B)|^{\alpha + 1/2 - 1/p},
\]
then $\TLsA = \TLsB$. Thus, the particular range of anisotropic Triebel-Lizorkin sequence spaces corresponding to $p=q$ is \emph{independent} of the choice of expansive matrix. This is merely a reflection of two facts: 
\begin{enumerate}
\item Triebel-Lizorkin sequence spaces coincide with Besov sequence spaces when $p=q$, and
\item Besov sequence spaces ${\dot{\mathbf{b}}^{\alpha}_{p,q}(A)}$ are independent of the choice of a dilation $A$ (up to a normalization of the smoothness parameter $\alpha$) in light of Triebel's transference method \cite[Proposition 5.26]{triebel2006theory}.
\end{enumerate}
The question whether the full scale of anisotropic Triebel-Lizorkin spaces is independent of the choice of dilation matrix was also settled by Triebel \cite{triebel2006theory} for the case of diagonal matrices. The following theorem is \cite[Proposition 5.26]{triebel2006theory}.

\begin{thm}[\cite{triebel2006theory}]
Let $\alpha \in \mathbb{R}$, $p \in (0, \infty)$ and $q \in (0, \infty]$. Let $A = \diag(2^{a_1}, ..., 2^{a_d})$ and $B = \diag(2^{b_1}, ..., 2^{b_d})$ for anisotropies
\[
(a_1, ..., a_d), (b_1, ..., b_d) \in (0, \infty)^d \quad \text{satisfying} \quad \sum_{i = 1}^d a_i = \sum_{i =1}^d b_i = d.
\]
Suppose that $A \neq B$. Then $\TLsA = \TLsB$ if and only if $p = q$. 
\end{thm}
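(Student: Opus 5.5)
The "if" direction has already been observed above: when $p=q$, the $\TLsA$ quasi-norm becomes
\[
\Big(\sum_{j,k} |\det(A)|^{-j(\alpha+1/2)p}|c_{j,k}|^p\, |\det(A)|^{j}\Big)^{1/p} .
\]
Since $\det A=\det B=2^d$, this expression does not depend on the matrix, so $\TLsA=\TLsB$ with equal quasi-norms. The work is in the converse. Assume $A\neq B$ and $p\neq q$. We will build sequences whose $\TLsA$- and $\TLsB$-quasi-norms have an unbounded ratio.

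Because $A\neq B$ and both determinants equal $2^d$, there are coordinates $i_1,i_2$ with $a_{i_1}>b_{i_1}$ and $a_{i_2}<b_{i_2}$. The test sequences will live on a single column of cubes. Fix $N$ and look at scales $j=0,-1,\dots,-N$ (the small scales). For each such $j$, choose the cubes $A^j([0,1]^d+k)$ that meet a fixed unit-size set, with $k$ placed so that these cubes form a nested chain $Q_0^A\supset Q_{-1}^A\supset\cdots$. Give them the coefficients $c_{j,k}=|\det A|^{j(\alpha+1/2)}$ and set all other coefficients to zero. With this normalisation, the $A$-quasi-norm equals
\[
\Big\|\Big(\sum_j \1_{Q^A_j}\Big)^{1/q}\Big\|_{L^p}.
\]
Since $|Q_j^A|=2^{dj}$, the sum $\sum_j\1_{Q_j^A}$ is roughly $N-m$ on $Q_m\setminus Q_{m+1}$, a set of measure $\approx 2^{dm}$. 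The nested case therefore gives a quantity like $\big(\sum_m 2^{-d|m|}\,|m|^{p/q}\big)^{1/p}$, which is bounded independently of $N$.

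Next, look at the same coefficient array, indexed by $(j,k)$, through $B$. The cubes $B^j([0,1]^d+k)$ with the same indices are no longer nested. In coordinate $i_1$ their side lengths are $2^{jb_{i_1}}\gg 2^{ja_{i_1}}$, and in coordinate $i_2$ they are much shorter. The translations $B^jk$ therefore separate the cubes along direction $i_2$. More precisely, pick $k_j$ so that $A^jk_j$ forms the nested chain. Then $B^jk_j$ differs from $A^jk_j$ by a factor $2^{j(b_i-a_i)}$ in each coordinate. As $|j|$ grows these points spread out, so the $B$-cubes become essentially pairwise disjoint. In that situation the $B$-quasi-norm is
\[
\Big(\sum_j |Q_j^B|\Big)^{1/p}\asymp 1,
\]
which is again bounded. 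A single chain is therefore not enough. Instead, superpose many translated chains so that the roles flip. Use $M$ disjoint translates chosen to be disjoint in the $A$-picture. In the $B$-picture, arrange the translations so that at each scale $j$ the $B$-cubes overlap on a common set. Then one side sees an $\ell^p$-type sum (disjoint supports, giving $M^{1/p}$) and the other sees an $\ell^q$-sum on overlapping supports (giving about $M^{1/q}$ on a set of comparable measure).

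The ratio is then $M^{1/p-1/q}$ or its reciprocal, which is unbounded as $M\to\infty$ whenever $p\neq q$. This violates any embedding in one direction. Swapping the roles of $A$ and $B$, or the sign of $1/p-1/q$, handles the other direction. Theorem \ref{pinf} lets us pass to large subsets $E_{j,k}$ of the cubes, so approximate overlaps suffice. This is convenient for making the overlap region exact up to a fixed fraction.

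The main obstacle is the combinatorial construction. We need index sets $\{(j,k)\}$ that are separated with respect to one matrix and stacked with respect to the other, and this must hold uniformly over $N$ scales. Concretely, we want $\sum_{j,k}\1_{Q^A_{j,k}}\le C$ while $\sum_{j,k}\1_{Q^B_{j,k}}\gtrsim N$ on a set carrying a fixed proportion of the total measure. I would first try to realise this with the single scale-shift direction: the axis where $b_i>a_i$ makes the $B$-cubes long and thin, and lattice points $k$ can be chosen with $B^jk$ aligned along that axis while the $A^jk$ are spread out. Checking the measure bookkeeping across scales is where the inequality $a_{i}\neq b_i$ is genuinely used.
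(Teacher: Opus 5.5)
Your treatment of $p=q$ is correct, and it is the only argument the paper itself gives. For the converse the paper cites Triebel; it is also the special case of Theorem~\ref{thm:classification_sequence} in which the matrices $A^jB^{-j}=\diag(2^{j(a_1-b_1)},\dots,2^{j(a_d-b_d)})$, $j\in\Z$, are pairwise distinct. Your converse has a genuine gap: you defer the construction as ``the main obstacle'', and the targets you set for it cannot be met. Cubes $B^j([0,1]^d+k)$ of one fixed scale tile $\R^d$, so they cannot ``overlap on a common set''. Since $|A^j([0,1]^d+k)|=|B^j([0,1]^d+k)|=2^{dj}$, every index set $I$ satisfies
\[
\int_{\R^d}\sum_{(j,k)\in I}\1_{B^j([0,1]^d+k)}\,dx=\sum_{(j,k)\in I}2^{dj}=\int_{\R^d}\sum_{(j,k)\in I}\1_{A^j([0,1]^d+k)}\,dx .
\]
Hence the $B$-cubes can be $N$ deep only on a set of measure at most $N^{-1}\sum_I 2^{dj}$. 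They are never $N$ deep on a fixed proportion of the total mass, nor on a set of ``comparable measure''. More fundamentally, superposing chains cannot work. If $I$ has $M$ indices at each scale $0,-1,\dots,-N$, the scale-$j$ layer has mass $M2^{dj}$. Then $p/q$-subadditivity (if $p\le q$) or Minkowski's inequality in $L^{p/q}$ (if $p>q$) shows that both quasi-norms of your normalized sequence are $\asymp M^{1/p}$, uniformly in $N$ and in how the chains are placed.

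The missing idea is that all $N$ scales must carry comparable mass, so you need about $2^{-dj}$ cubes at scale $j$. For instance, take $I=\{(j,k):-N\le j\le 0,\ B^j([0,1]^d+k)\subseteq[0,1]^d\}$, with $c_{j,k}=2^{dj(\alpha+1/2)}$ on $I$ and $c_{j,k}=0$ elsewhere.
\begin{itemize}
\item The $B$-cubes are $N+1$ deep on $[0,1/2]^d$ and lie in $[0,1]^d$, so $\|c\|_{\TLsB}\asymp N^{1/q}$.
\item The $A$-cubes of scale $j$ are pairwise disjoint and lie in the unit-volume box $R_j=A^jB^{-j}[0,1]^d=\prod_i[0,2^{j(a_i-b_i)}]$. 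One has $|R_j\cap R_{j'}|=2^{-\Delta|j-j'|}$ with $\Delta=\sum_i(a_i-b_i)^+>0$. This yields $|\{\sum_j\1_{R_j}\ge m+1\}|\le(N+1)2^{-\Delta m}$, which together with $\sum_I2^{dj}\ge2^{-d}(N+1)$ gives $\|c\|_{\TLsA}\asymp N^{1/p}$.
\end{itemize}
The ratio $N^{1/q-1/p}$ is then unbounded in one direction whenever $p\neq q$. By the closed graph theorem, equality of the two spaces would force equivalent quasi-norms, so the spaces differ. The map $A^jB^{-j}$ you singled out is the right mechanism. But it must act on a full layer of cubes of unit total mass at every scale, turning each layer into an increasingly eccentric box, not on the corners of a few chains.
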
 

Recently, a full classification of (possibly nondiagonal) dilations yielding anisotropic Triebel-Lizorkin  sequence spaces was obtained in \cite{velthoven2026discrete}. The precise statement is as follows.

\begin{thm}[\cite{velthoven2026discrete}] \label{thm:classification_sequence}
Let $A, B \in \mathrm{GL}(d, \mathbb{R})$ be expansive. Then the following are equivalent:
\begin{enumerate}[(i)]
 \item The set $\{A^j B^{-j} : j \in \Z \}$ is finite.
 \item $\TLsA = \TLsB$ for some $(\alpha, p, q) \in \mathbb{R} \times (0, \infty]^2$ with $p\neq q$.
 \item $\TLsA = \TLsB$ for all $(\alpha, p, q) \in \mathbb{R} \times (0, \infty]^2$.
\end{enumerate}
\end{thm}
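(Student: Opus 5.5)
The plan is to prove (iii)$\Rightarrow$(ii), then (i)$\Rightarrow$(iii), then (ii)$\Rightarrow$(i). The first is trivial, since any triple with $p\neq q$ will do.

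For (i)$\Rightarrow$(iii), suppose $\{A^jB^{-j}: j\in\Z\}$ is finite. Then $\sup_j \|A^{-j}B^j\|<\infty$, and taking determinants gives $|\det A|=|\det B|$, since otherwise $|\det(A^jB^{-j})|$ would be unbounded. Consequently the cubes $Q^A_{j,k}=A^j([0,1]^d+k)$ and the cubes $B^j([0,1]^d+k')$ have the same volume at every scale $j$. Write $M_j=B^{-j}A^j$; there are only finitely many such matrices. The cube $A^j([0,1]^d+k)=B^j M_j([0,1]^d+k)$ is therefore a parallelepiped with bounded eccentricity in $B^j$-coordinates. It meets at most $N$ of the cubes $B^j([0,1]^d+k')$, with $N$ independent of $j$ and $k$, and it contains a $B$-cube of scale $j-m$ for some fixed $m$.

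Using this, I would build at each scale $j$ a bijection $\sigma_j:\Z^d\to\Z^d$ of bounded displacement $\|M_j k-\sigma_j(k)\|\le C$. Existence follows from Hall's marriage theorem for the two lattices $\Z^d$ and $M_j\Z^d$, which have the same covolume. Since only finitely many $M_j$ occur, the constant $C$ is uniform in $j$. I then set $(Sc)_{j,\sigma_j(k)}=c_{j,k}$. The claim is that $\|Sc\|_{\TLsB}\asymp\|c\|_{\TLsA}$.

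To prove the claim, note that the weights $|\det A|^{-j(\alpha+1/2)}$ and $|\det B|^{-j(\alpha+1/2)}$ coincide. Moreover, the indicator of $Q^A_{j,k}$ and that of $Q^B_{j,\sigma_j(k)}$ are both controlled, up to a uniform enlargement, by a dilated ball. So I would apply Theorem \ref{pinf} with $E_{j,k}=Q^A_{j,k}\cap \widetilde{Q}^B$, where $\widetilde{Q}^B$ is a fixed finite union of neighbours of $Q^B_{j,\sigma_j(k)}$, chosen so that $|E_{j,k}|>\eta|\det A|^j$. Alternatively, I would use the Fefferman--Stein-type inequality for the maximal operator of the homogeneous space $(\R^d,\rho_B)$ in the form $\1_{Q'}\lesssim (M_B\1_{Q''})^{1/r}$ with $r<\min(p,q)$. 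For $p=\infty$ I would use the Carleson-type form, where scales of $P$ match up to a bounded shift.

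For (ii)$\Rightarrow$(i), which I expect to be the main obstacle, I argue by contradiction. Assume the set is infinite, and take first the case $|\det A|=|\det B|$; otherwise I would test scale-separated single-atom sequences to force a determinant mismatch in the norms. Given $p\neq q$, I would construct test sequences supported on a single scale $j$ and, within it, on configurations of $A^j$-cubes that interact differently with the $B^j$-grid. Concretely, I take a block of $n$ consecutive scales in which $M_j$ ranges over many distinct matrices, and pick one cube per scale whose union overlaps heavily in one geometry but is disjoint in the other. The $\TLsA$ norm then behaves like an $\ell^q$-sum of $n$ atoms localised together, giving $n^{1/q}$, while the $\TLsB$ norm behaves like $n^{1/p}$ because the atoms are spread out. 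Letting $n\to\infty$ contradicts the norm equivalence, which would follow from the closed graph theorem. The delicate part is extracting such configurations from the mere infinitude of $\{A^jB^{-j}\}$, via discreteness or compactness of the relevant matrix families. I would model this on Triebel's diagonal argument, handling the case $\sup_j\|A^jB^{-j}\|=\infty$ and the bounded-but-infinite case separately.
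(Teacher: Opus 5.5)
\textbf{Your argument for (i)$\Rightarrow$(iii) proves the wrong statement.} Condition (iii) says that $\TLsA$ and $\TLsB$ coincide as subsets of $\mathbb{C}^{\Z\times\Z^d}$, i.e.\ that the \emph{identity} map is an isomorphism. Showing that the reindexing $(Sc)_{j,\sigma_j(k)}=c_{j,k}$ is an isomorphism only shows that the two spaces are isomorphic.

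Your construction is exactly Lemma~\ref{sc} and Theorem~\ref{main} of the paper. Finiteness of $\{A^jB^{-j}\}$ enters only through the uniformity of the displacement constant, and that already follows from equivalence of $A$ and $B$ together with $|\det A|=|\det B|$. So the argument cannot yield (iii). Take $A=2R_\theta$, with $R_\theta$ the rotation by an angle $\theta\notin\pi\mathbb{Q}$, and $B=2I_2$. Your argument goes through verbatim, yet $\{A^jB^{-j}\}=\{R_{j\theta}\}$ is infinite, so $\TLsA\neq\TLsB$ for $p\neq q$. Taking $\sigma_j=\mathrm{id}$ does not help either, since $\|M_jk-k\|$ is unbounded in $k$ unless $M_j=I_d$. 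No cube-by-cube comparison can work.

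The missing idea is a global change of variables. With $N_j:=A^jB^{-j}$ one has $A^j([0,1]^d+k)=N_j\big(B^j([0,1]^d+k)\big)$ for the \emph{same} index $k$, and $|\det N_j|=1$. Split $\Z$ into the finitely many classes $J_N=\{j: N_j=N\}$. For $c$ supported on scales in $J_N$, the function inside the $\TLsA$-quasi-norm is the one inside the $\TLsB$-quasi-norm composed with $N^{-1}$; the weights agree because the determinants do. Hence for $p<\infty$ the two quasi-norms are \emph{equal}. The quasi-triangle inequality and monotonicity under restriction then handle the finitely many classes. For $p=\infty$ you additionally use that $N^{-1}$ maps each $A$-cube of scale $\ell$ into a union of boundedly many $B$-cubes of scale $\ell$.

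\textbf{Your sketch of (ii)$\Rightarrow$(i) is not a proof, and the test configurations you name cannot work.} Consider a sequence supported on one scale, or with one nonzero entry per scale. For such a sequence both quasi-norms ($p<\infty$) are comparable to their values for $q=p$, namely $\big(\sum_{j,k}|\det A|^{-jp(\alpha+1/2-1/p)}|c_{j,k}|^p\big)^{1/p}$ and its $B$-analogue, regardless of where the cubes sit. The reason is that cubes of different scales containing a given point have geometrically growing volumes. H\"older's inequality then gives the upper bound, and disjointifying $Q_j\setminus\bigcup_{i<j}Q_i$ along a sparse progression of scales gives the lower bound. Such tests therefore see nothing but $|\det A|$ versus $|\det B|$, and never produce $n^{1/q}$ against $n^{1/p}$.

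What is needed is, at each of $n$ fine scales, many cubes filling a set $V_j$ of fixed size. The $V_j$ must be chosen so that the sets $N_jV_j$ all coincide while the $V_j$ themselves are essentially disjoint (e.g.\ thin strips in $n$ well-separated directions when the $N_j$ are rotations). This gives the ratio $n^{|1/p-1/q|}$. Producing such sets from the mere infinitude of $\{N_j\}$, in both the unbounded and the bounded case, is the real content of this implication, and you leave it open.

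Likewise, single atoms do not detect $|\det A|\neq|\det B|$ when $\alpha=1/p-1/2$, since then every unit sequence has quasi-norm $1$ in both spaces. There you again need multi-cube tests. For example, take all $A$-cubes of scale $j$ inside a fixed ball $U$, for $n$ different scales; in the $B$-picture these fill the sets $B^jA^{-j}U$, whose volumes vary geometrically.

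The paper itself only quotes this theorem without proof, so the above is measured against what a correct proof requires rather than against a written argument.
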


Note that condition (i) of Theorem \ref{thm:classification_sequence} implies, in particular, that $|\det(A)| = |\det(B)|$, and hence that the matrices $A$ and $B$ are equivalent.
As such, the condition that classifies Triebel-Lizorkin sequences spaces is stronger than the corresponding condition for Triebel-Lizorkin function spaces.

\section{New results on sequence spaces} \label{sec:sequence2}
This section contains our new results on the equivalence of sequence spaces.

\subsection{Bounded displacement maps}
The following result on bounded displacement maps is part of the folklore; see, e.g., 
 \cite[Section 3.2]{kolountzakis1997multi} or (the proof of) \cite[Proposition 2.1]{haynes2014equivalence} for the statement on bijections. However, as we need this statement also for lattices with possibly unequal covolume, we provide its proof for the sake of completeness.

\begin{lem} \label{bd}
Suppose that $S, T \in \mathrm{GL}(d, \mathbb{R})$ are such that $|\det (S)| \geq |\det (T)|$. Let $\Lambda_S = S\Z^d$ and $\Lambda_T=T \Z^d$ be the corresponding lattices with bounded fundamental domains $F_S$ and $F_T$, respectively. Let $r_S := \diam(F_S)$ and $r_T := \diam(F_T)$.
Then there exists an injection $\phi: \Lambda_S \to \Lambda_T$ satisfying
\[
\sup_{x \in \Lambda_S} \| x-\phi(x) \| \leq r_T + r_S.
\]
Moreover, if $|\det(S)| = |\det(T)|$, then $\phi : \Lambda_S \to \Lambda_T$ can be chosen to be a bijection. 
\end{lem}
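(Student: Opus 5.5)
We will prove the lemma by viewing it as a matching problem in a bipartite graph and applying Hall's marriage theorem. For infinite, locally finite graphs we use its compactness version (Rado's theorem): if every vertex on the left has finitely many neighbours and every finite set of left vertices has at least as many neighbours, then a left-saturating injection exists. For the bijective case we will use the Cantor--Schröder--Bernstein type argument from the same infinite Hall framework (Kőnig's theorem).

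Set $R := r_S + r_T$ and join $x \in \Lambda_S$ to $y \in \Lambda_T$ whenever $\|x-y\| \le R$. Each vertex has finitely many neighbours, since the lattices are discrete. Fix translates so that $\mathbb{R}^d = \bigsqcup_{x\in\Lambda_S}(x+F_S) = \bigsqcup_{y \in \Lambda_T}(y+F_T)$. We may assume $0$ lies in the closure of $F_S$ and of $F_T$; otherwise we replace the diameters by distance bounds, or simply note that any point of $x+F_S$ is within $r_S$ of $x$ when $x\in \overline{x+F_S}$. Choosing $F_S, F_T$ to contain $0$ in their closure, for instance as half-open parallelepipeds, costs nothing because the statement only involves diameters.

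The key step is Hall's condition. Let $X \subset \Lambda_S$ be finite and $U := \bigcup_{x\in X}(x+F_S)$, so $|U| = |X|\,|\det S|$. Each $u \in U$ lies in a unique $y+F_T$. Since $u$ is within $r_S$ of some $x\in X$ and within $r_T$ of $y$, we get $\|x-y\|\le R$, so $y \in N(X)$. Hence $U \subset \bigcup_{y\in N(X)}(y+F_T)$, which gives $|X|\,|\det S| \le |N(X)|\,|\det T|$. Since $|\det S|\ge|\det T|$, this yields $|N(X)| \ge |X|$. By Rado's infinite Hall theorem, proved via compactness or Tychonoff applied to the finite-neighbourhood choice spaces, there is an injection $\phi:\Lambda_S\to\Lambda_T$ with $\|x-\phi(x)\|\le R$.

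When $|\det S|=|\det T|$, the same argument with the roles of $S$ and $T$ exchanged gives an injection $\psi:\Lambda_T\to\Lambda_S$ with displacement at most $R$, since the graph is symmetric. Both injections use only edges of the same graph. The Kőnig/Schröder--Bernstein construction decomposes $\phi\cup\psi$ into chains and cycles and produces a bijection whose edges are edges of $\phi$ or of $\psi^{-1}$. Each such edge has length at most $R$, so the bijection satisfies the bound.

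The main obstacle is rigorously justifying the infinite Hall step. We will handle it by the standard compactness argument: for each finite $X$, finite Hall gives a matching, and these matchings lie in the compact product $\prod_{x}N(x)$ of finite sets, so a diagonal or Tychonoff limit yields a global injection. The volume-counting inequality itself is routine.
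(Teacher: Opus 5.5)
Your proposal is correct and follows essentially the same route as the paper: a volume-counting check of Hall's condition, then Rado's infinite Hall theorem for the injection, then a Schr\"oder--Bernstein argument for the bijection. The paper uses the tile-overlap neighbourhoods $U_x=\{y\in\Lambda_T : (x+F_S)\cap(y+F_T)\neq\emptyset\}$, which are contained in your distance-$R$ neighbourhoods. The normalization $0\in\overline{F_S}\cap\overline{F_T}$ that you flag is also used tacitly in the paper. The clean justification is to translate each domain by one of its own points, which preserves $r_S$ and $r_T$; switching to parallelepipeds would change them.
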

\begin{proof}
For each $x\in \Lambda_S$, define the finite set
\[
U_x := \{\, y \in \Lambda_T : (x+F_S)\cap (y+F_T)\neq \emptyset \,\}.
\]
Note that if $y\in U_x$, then there exists $z\in (x+F_S)\cap (y+F_T)$, and thus
\[
\|x-y\| \le \|x-z\| + \|z-y\| \le r_S + r_T.
\]
For settling the claim, it suffices therefore to find an injective map $\phi:\Lambda_S\to\Lambda_T$ with $\phi(x)\in U_x$ for all $x \in \Lambda_S$. For this, we verify Hall's condition \cite{rado1967note}.

Let $E \subseteq \Lambda_S$ be finite and set
\[
N(E) := \bigcup_{x\in E} U_x = \bigg\{\, y\in \Lambda_T : (y+F_T)\cap \bigcup_{x\in E}(x+F_S)\neq \emptyset \,\bigg\}.
\]
Define
$
V := \bigcup_{x\in E}(x+F_S)$ and $ W := \bigcup_{y\in N(E)}(y+F_T).
$
Since the union $\mathbb{R}^d = \bigcup_{x \in \Lambda_S} x+F_S$ is disjoint, we have that $|V|=|E| |F_S|=|E| |\det (S)|$. A similar argument shows that $|W|=|N(E)| |F_T|=|N(E)| |\det (T)|$. If $z\in V$, then $z\in x+F_S$ for some $x\in E$. Moreover, since $\{y+F_T : y\in \Lambda_T\}$ tiles $\mathbb{R}^d$, there exists $y\in \Lambda_T$ such that $z\in y+F_T$. Hence,  $z\in (x+F_S)\cap (y+F_T)$, that is, $y\in U_x\subseteq N(E)$. This also implies that  $z\in W$. Thus $V\subseteq W$, which yields that
\[
|E| |\det (S)| \le |N(E)| |\det (T)|.
\]
Using $|\det (S)|\ge |\det (T)|$, it follows that $|E|\le |N(E)|$, which verifies Hall's condition. Therefore, by Hall's theorem (see, e.g., \cite[Theorem 1]{rado1967note}), there exists an injective map $\phi:\Lambda_S\to\Lambda_T$ such that $\phi(x)\in U_x$ for all $x\in \Lambda_S$, which settles the first claim. 

For the additional part, note that if $|\det(S)| = |\det(T)|$, then we also get an injection $\psi : \Lambda_T \to \Lambda_S$ satisfying
\[
\sup_{y \in \Lambda_T} \| y - \psi (y) \| \leq r_T + r_S.
\]
Using (the proof of) the Schr\"oder-Bernstein theorem, we obtain a bijection with the claimed properties, see, e.g., \cite[Section 3.2]{kolountzakis1997multi} for more details.
\end{proof}

The following simple consequence of Lemma \ref{bd} is what we will actually use.

\begin{lem}\label{bd5}
Suppose that $A, B \in \mathrm{GL}(d, \mathbb{R})$ are such that $|\det (A)|\geq |\det (B)|$. Then there exists an injection $\pi: \mathbb{Z}^d \to \mathbb{Z}^d$ satisfying
\[
\sup_{k \in \mathbb{Z}^d} \| k - A^{-1} B\pi(k) \| \le \sqrt{d} (1+ ||A^{-1} B||).
\]
Moreover, if $|\det(A)| = |\det(B)|$, then $\pi$ can be chosen to be a bijection.
\end{lem}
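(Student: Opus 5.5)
We apply Lemma \ref{bd} with $S := I_d$ and $T := A^{-1}B$. Since $|\det(A)| \geq |\det(B)|$, we have $|\det(S)| = 1 \geq |\det(A)|^{-1}|\det(B)| = |\det(T)|$, so the hypothesis of Lemma \ref{bd} holds. The lattices are then $\Lambda_S = \Z^d$ and $\Lambda_T = A^{-1}B\Z^d$.

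For the fundamental domains we take the half-open cubes $F_S := [0,1)^d$ and $F_T := A^{-1}B[0,1)^d$. These are bounded, and their translates by $\Lambda_S$ and $\Lambda_T$ respectively tile $\R^d$ disjointly, as required in the proof of Lemma \ref{bd}. Their diameters satisfy $r_S = \diam([0,1)^d) = \sqrt{d}$ and
\[
r_T = \diam\big(A^{-1}B[0,1)^d\big) \leq \|A^{-1}B\| \diam([0,1)^d) = \sqrt{d}\, \|A^{-1}B\|,
\]
because a linear map $L$ satisfies $\|Lx - Ly\| \leq \|L\| \|x-y\|$.

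Lemma \ref{bd} therefore yields an injection $\phi : \Z^d \to A^{-1}B\Z^d$ with $\|k - \phi(k)\| \leq r_S + r_T \leq \sqrt{d}(1+\|A^{-1}B\|)$ for all $k \in \Z^d$. The map $\Z^d \to A^{-1}B\Z^d$, $m \mapsto A^{-1}Bm$, is a bijection since $A^{-1}B$ is invertible. So we may define
\[
\pi(k) := (A^{-1}B)^{-1}\phi(k) = B^{-1}A\,\phi(k) \in \Z^d .
\]
As a composition of injections, $\pi$ is injective, and $A^{-1}B\pi(k) = \phi(k)$. This gives exactly the claimed bound
\[
\sup_{k \in \Z^d} \|k - A^{-1}B\pi(k)\| \leq \sqrt{d}\,(1+\|A^{-1}B\|).
\]

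If $|\det(A)| = |\det(B)|$, then $|\det(S)| = |\det(T)|$, and the second part of Lemma \ref{bd} lets us take $\phi$ to be a bijection. Then $\pi$ is a composition of bijections and hence a bijection.

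No step here is a real obstacle. The only points needing care are the choice of half-open fundamental domains, so that the tilings are genuinely disjoint, and the diameter estimate for the image cube.
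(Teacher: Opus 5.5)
Your proposal is correct and matches the paper's proof: you apply Lemma \ref{bd} with $S = I_d$, $T = A^{-1}B$ and half-open cube fundamental domains, use the same diameter bounds, and set $\pi(k) := T^{-1}\phi(k)$, with the bijective case following in the same way. The only difference is cosmetic: you take $[0,1)^d$ where the paper takes $(0,1]^d$.
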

\begin{proof}
We aim to apply Lemma \ref{bd} to the matrices $S := I_d$ and  $T := A^{-1} B$. For the fundamental domains $F_S := (0,1]^d$ and $F_T := T(0,1]^d$, we have have that $\diam(F_S) \leq \sqrt{d}$ and  $\diam(F_T) \leq \sqrt{d} \|T\|$. Since $|\det(S)| \geq |\det(T)|$ (resp. $|\det(S)| = |\det(T)|$), it follows therefore from Lemma \ref{bd} that there exists an injection (resp. a bijection) $\phi : \mathbb{Z}^d \to T \mathbb{Z}^d$ such that $\sup_{k \in \mathbb{Z}^d} \| k - \phi(k) \| \leq  \sqrt{d} (1+\| T \|)$. Setting $\pi (k) := T^{-1} \phi (k)$ yields thus an injection (resp. a bijection) $\pi : \mathbb{Z}^d \to \mathbb{Z}^d$ satisfying
\[
\sup_{k \in \mathbb{Z}^d} \| k - T \pi(k) \| = \sup_{k \in \mathbb{Z}^d} \| k -  \phi(k) \| \leq  \sqrt{d} (1+ ||T||),
\]
which settles the claim.
\end{proof}

\subsection{Majorizing and maximal functions}
This subsection contains some further auxiliary results that will be used in proving our new results on sequence spaces.

We start with the following definition of majorizing functions.

\begin{defn}
Let $A \in \mathrm{GL}(d, \mathbb{R})$ be expansive, $0<r<\infty$, and $\lambda>0$. Given $c=(c_{j,k})_{j\in \Z, k\in \Z^d} \in \mathbb{C}^{\Z \times \Z^d}$, define an associated sequence $(c^A_j)_{j\in \Z}$ of functions  $c_j^A : \mathbb{R}^d \to [0, \infty)$ by
\begin{equation}\label{cja}
c^{A}_j(x) = \bigg( \sum_{k\in \Z^d} \frac{|c_{j,k}|^r}{(1+\rho_A(A^{-j}x - k))^\lambda} \bigg)^{1/r}, \quad x \in \R^d.
\end{equation}
\end{defn}

We have that  
\begin{align} \label{main5}
\sum_{k\in \Z^d} |c_{j,k}| \1_{A^j ([0, 1]^d + k)}(x) \lesssim c^A_j(x) \quad \text{for any} \quad x \in \mathbb{R}^d.
\end{align}
Indeed, note that for any $x\in A^j ([0, 1]^d + k)$, we have $\rho_A(A^{-j}x - k) \le \sup_{y\in [0,1]^d} \rho_A(y) \le C$ for some $C>0$, and thus
\begin{align*}
\sum_{k\in \Z^d} |c_{j,k}| \1_{A^j ([0, 1]^d + k)}(x) &\le
\sum_{k\in \Z^d} \frac{|c_{j,k}|(1+C^\lambda)^{1/r} }{(1+\rho_A(A^{-j}x - k)^{\lambda})^{1/r} }  \1_{A^j ([0, 1]^d + k)}(x) \\
&\le (1+C^\lambda)^{1/r}  c^A_j(x),
\end{align*}
which shows \eqref{main5}. 

In addition to the majorizing functions defined above, we will use the Hardy-Littlewood maximal function. Recall that, for a Borel measurable function $f : \mathbb{R}^d \to \mathbb{C}$,  its Hardy-Littlewood maximal function $M_{\qn}f$ with respect to the Lebesgue measure is given by
\[
M_{\qn}f(x) = \sup_{x\in B \in \mathcal B} \frac{1}{|B|} \int_B |f(y)| \; d\mu(y), \quad x \in \mathbb{R}^d.
\]
The following lemma is a variation on that of \cite[Lemma 6.2]{bownik2006atomic}.

\begin{lem}\label{max} 
Let $0<a\le r<\infty$, $\lambda>r/a$ and $0 < \eta < 1$.  Let $E_{j,k} \subseteq Q_{j,k}=A^j([0,1]^d+k)$ be Lebesgue measurable sets satisfying $|E_{j,k}| > \eta |\det(A)|^j$ for $j \in \Z$ and $k \in \Z^d$.

There exists $C > 0$ with the following property:
If  $c \in \mathbb{C}^{ \Z \times \Z^d} $ and $j\in \Z$, then
\begin{equation}\label{max1}
c^A_j(x) \le C \bigg( M_{\rho_A} \bigg( \sum_{k\in \Z^d} |c_{j,k}| \1_{E_{j,k}}\bigg)^a \bigg)^{1/a} \quad \text{for all} \quad x\in \R^d.
\end{equation}
The constant $C$ depends only on $\lambda-r/a$, $\eta^{-1/a}$ and the choice of dilation $A$.
\end{lem}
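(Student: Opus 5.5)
The plan is the classical Frazier--Jawerth annular decomposition argument, carried out in the quasi-metric $\rho_A$. First I reduce to a single scale $j$ and a fixed point $x$. Write $g := \sum_{k} |c_{j,k}| \1_{E_{j,k}}$; since the cubes $Q_{j,k}$ have pairwise null overlaps, $g^a = \sum_k |c_{j,k}|^a \1_{E_{j,k}}$ almost everywhere. Using the homogeneity $\rho_A(A^{-j}y)=|\det A|^{-j}\rho_A(y)$, the substitution $x \mapsto A^j x$ and the dilation invariance of the family of $\rho_A$-balls (with $M_{\rho_A}$ commuting with dilations by $A^j$) reduce the claim to $j=0$. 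So it suffices to show
\[
\bigg(\sum_{k} \frac{|c_{0,k}|^r}{(1+\rho_A(x-k))^\lambda}\bigg)^{1/r} \lesssim \big(M_{\rho_A}(g^a)(x)\big)^{1/a}.
\]

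Next I decompose $\Z^d$ into annuli according to $\rho_A(x-k)$. Let $I_0=\{k : \rho_A(x-k)\le 1\}$ and, for $\ell\ge1$, let $I_\ell=\{k: |\det A|^{\ell-1}<\rho_A(x-k)\le |\det A|^{\ell}\}$. Because $r/a\ge 1$, the embedding $\ell^{a}\subset\ell^{r}$ gives
\[
\Big(\sum_{k\in I_\ell}|c_{0,k}|^r\Big)^{1/r}\le\Big(\sum_{k\in I_\ell}|c_{0,k}|^a\Big)^{1/a}.
\]
I will then bound the inner sum by an average. Since $\sup_{y\in[0,1]^d}\rho_A(y)\le C$, the quasi-triangle inequality shows that for $k\in I_\ell$ we have $Q_{0,k}\subset B_{\rho_A}(x, C'|\det A|^{\ell})$ for a constant $C'$ depending only on $A$. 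The ball $B_{\rho_A}(x,R)$ has measure comparable to $R$, and $|E_{0,k}|>\eta$. Together these give
\[
\sum_{k\in I_\ell}|c_{0,k}|^a \le \eta^{-1}\int_{B_{\rho_A}(x,C'|\det A|^\ell)} g^a \lesssim \eta^{-1}|\det A|^{\ell}\,M_{\rho_A}(g^a)(x).
\]

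Finally I sum over the annuli. On $I_\ell$ the weight $(1+\rho_A(x-k))^{-\lambda}$ is at most a constant times $|\det A|^{-\ell\lambda}$. Hence
\[
c_0^A(x)^r\lesssim \sum_{\ell\ge0}|\det A|^{-\ell\lambda}\,\eta^{-r/a}|\det A|^{\ell r/a}\big(M_{\rho_A}(g^a)(x)\big)^{r/a}.
\]
The hypothesis $\lambda>r/a$ makes the geometric series in $\ell$ converge, with a bound depending only on $\lambda-r/a$ and $A$. Taking $r$-th roots yields \eqref{max1}, and the $\eta$-dependence is exactly $\eta^{-1/a}$, as claimed.

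The main obstacle I expect is bookkeeping rather than ideas. One point is the inclusion $Q_{0,k}\subset B_{\rho_A}(x,C'|\det A|^\ell)$, which relies on the quasi-triangle constant. Another is checking that rescaling from scale $j$ to scale $0$ commutes with $M_{\rho_A}$, so that the constants are uniform in $j$. This holds because $A^j$ maps $\rho_A$-balls to $\rho_A$-balls, multiplies radii by $|\det A|^j$, and scales Lebesgue measure by the same factor.
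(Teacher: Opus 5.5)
Your proposal is correct and follows essentially the same route as the paper's proof: split $\Z^d$ into $\rho_A$-annuli around $x$, pass from $\ell^r$ to $\ell^a$ using $r/a\ge 1$, and use $|E_{j,k}|>\eta|\det(A)|^j$ to turn $\sum_{k}|c_{j,k}|^a$ into an integral over a $\rho_A$-ball of measure $\asymp|\det(A)|^{j+\ell}$ that contains the cubes. The geometric series then converges because $\lambda>r/a$. The only cosmetic difference is that you first rescale to $j=0$ using the dilation invariance of $\rho_A$-balls and of $M_{\rho_A}$, whereas the paper works directly at scale $j$ with the balls $B_{\rho_A}(x,C|\det(A)|^{j+m})$.
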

\begin{proof}
 Let $x \in \R^d$ be arbitrary. For $m \in \mathbb{N}$, define the sets
 \[
  E_0 (x) := \big\{ k \in \Z^d : \rho_A (A^{-j} x - k) \leq 1 \big\}
 \]
and
 \[
  E_m (x) := \big\{ k \in \Z^d : |\det(A)|^{m-1} < 1+\rho_A (A^{-j} x - k) \leq |\det(A)|^{m} \big\}.
 \]
Then
\begin{align*}
 (c_j^A(x) )^r &\lesssim \sum_{m \in \mathbb{N}_0} |\det(A)|^{-m \lambda} \sum_{k \in E_m (x) } |c_{j,k} |^r \\
 &\leq \sum_{m \in \mathbb{N}_0} |\det(A)|^{-m \lambda} \bigg( \sum_{k \in E_m (x) } |c_{j,k} |^a \bigg)^{r/a} \\
 &\leq \eta^{-r/a} \sum_{m \in \mathbb{N}_0} |\det(A)|^{-m \lambda}  |\det(A)|^{-jr/a} \bigg(  \sum_{k \in E_m (x) } \int_{E_{j,k}} \sum_{k \in \Z^d} |c_{j,k} |^a \1_{E_{j,k}} (y) \; dy \bigg)^{r/a} \\
 &\leq \eta^{-r/a} \sum_{m \in \mathbb{N}_0} |\det(A)|^{-m \lambda} |\det(A)|^{-jr/a}  \bigg(  \int_{B_m} \sum_{k \in \Z^d} |c_{j,k} |^a \1_{E_{j,k}} (y) \; dy \bigg)^{r/a},
\end{align*}
where the last inequality used that $$\bigcup_{k \in E_m (x)} E_{j,k} \subseteq \bigcup_{k \in E_m (x)} Q_{j,k} \subseteq B_m := B_{\rho_A} (x, C |\det(A)|^{j+m})$$ for some constant $C>0$. Hence, using the definition of the Hardy-Littlewood maximal function, we obtain
\begin{align*}
 (c_j^A(x) )^r &\lesssim \eta^{-r/a} \sum_{m \in \mathbb{N}_0} |\det(A)|^{-m \lambda} |\det(A)|^{-jr/a} |B_m|^{r/a} \bigg( \frac{1}{|B_m|} \int_{B_m} \sum_{k \in \Z^d} |c_{j,k} |^a \1_{E_{j,k}} (y) \; dy \bigg)^{r/a} \\
 &\lesssim \eta^{-r/a} \sum_{m \in \mathbb{N}_0} |\det(A)|^{-m (\lambda - r/a)}   \bigg( M_{\rho_A} \bigg( \sum_{k \in \Z^d} |c_{j,k} |^a \1_{E_{j,k}} \bigg)(x) \bigg)^{r/a} \\
 &\lesssim\eta^{-r/a} \bigg( M_{\rho_A} \bigg( \sum_{k \in \Z^d} |c_{j,k} |^a \1_{E_{j,k}} \bigg)(x) \bigg)^{r/a},
\end{align*}
which settles the claim.
\end{proof}

Lastly, we will use the Fefferman-Stein vector-valued inequalities. The following result is a special case of, e.g., \cite[Theorem 1.2]{grafakos2009vector}.

\begin{lem}\label{sfvvi}
Let $A \in \mathrm{GL}(d, \mathbb{R})$ be expansive. Suppose that $1<p<\infty$, $1<q \le \infty$.  Then
there exists a constant $C>0$ such that
\begin{displaymath}
\bigg\|\bigg(\sum_{i}| M_{\qn}f_{i}|^{q}\bigg)^{1/q}\bigg\|_
{L^{p}(\R^d)}\quad\le C\bigg\|\bigg(\sum_{i} |f_{i}|^{q}\bigg)^
{1/q}\bigg\|_{L^{p}(\R^d)}
\end{displaymath}
holds for any $(f_{i})_i \subset L^{p}(\R^d)$.
\end{lem}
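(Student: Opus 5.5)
We would verify the hypotheses of the abstract Fefferman--Stein theorem on spaces of homogeneous type, or equivalently rerun the classical Fefferman--Stein argument with Euclidean cubes replaced by $\rho_A$-balls. The first step is to record that $(\mathbb{R}^d,\rho_A,\mu)$ is a space of homogeneous type (Section \ref{sec:expansive}). Indeed, by property (ii) and a change of variables, $|B_{\rho_A}(x,r)| = r\,|B_{\rho_A}(0,1)|$ for $r=|\det(A)|^{j}$, and monotonicity gives $|B_{\rho_A}(x,r)|\asymp r$ for all $r>0$. So $\mu$ is doubling with respect to $\rho_A$, and the quasi-triangle inequality (iii) holds. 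A Vitali-type covering lemma for $\rho_A$-balls follows in the usual way: if two balls meet, the one of smaller radius is contained in a fixed dilate of the larger, where the dilation constant depends only on the constant $c$ in (iii). From this we get the scalar weak type $(1,1)$ bound for $M_{\qn}$. Since $M_{\qn}$ is trivially bounded on $L^\infty$, Marcinkiewicz interpolation gives $\|M_{\qn} f\|_{L^p}\lesssim \|f\|_{L^p}$ for $1<p\le\infty$.

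For the vector-valued statement we split into cases.

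\emph{Case $q=\infty$.} Pointwise $\sup_i M_{\qn}f_i\le M_{\qn}(\sup_i|f_i|)$, and the scalar bound finishes.

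\emph{Case $p=q$.} This follows from Fubini and the scalar bound.

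\emph{Case $1<q<p<\infty$.} We use duality with $s=(p/q)'$. It suffices to show the weighted inequality
\[
\int (M_{\qn}f)^q\, g \;\lesssim\; \int |f|^q\, M_{\qn}g
\]
for $g\ge0$. This follows by interpolating between the trivial $L^\infty$ estimate and a weak $(1,1)$ estimate from $L^1(M_{\qn}g)$ to $L^{1,\infty}(g)$, both obtained with the same covering argument. Summing over $i$ and applying Hölder's inequality with the scalar $L^s$ bound for $M_{\qn}g$ gives the claim.

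\emph{Case $1<p<q$.} We prove the $\ell^q$-valued weak type $(1,1)$ estimate and interpolate (Marcinkiewicz for sublinear $\ell^q$-valued operators) with the already proven case $p=q$.

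The main obstacle is this $\ell^q$-valued weak $(1,1)$ bound. Our plan is a Calderón--Zygmund decomposition of $F=(\sum_i|f_i|^q)^{1/q}$ at height $t$ using $\rho_A$-balls. The Whitney-type cover can be taken from the dilated cubes $\mathcal Q$: these form a nested dyadic-like system at scales $|\det(A)|^j$, and each $Q\in\mathcal Q$ is contained in a $\rho_A$-ball of comparable measure. We write $f_i=g_i+\sum_Q b_{i,Q}$. The good part $g=(g_i)$ is bounded by $Ct$ in $\ell^q$ and lies in $L^1$, so the case $p=q$ handles it. For the bad parts, each $b_{i,Q}$ has mean zero on $Q$, and off a fixed dilate $Q^*$ of $Q$ we would bound $M_{\qn}b_{i,Q}$ by $M_{\qn}$ of the average $|Q|^{-1}\int_Q|f_i|$ times $\mathbf 1_Q$. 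Minkowski's inequality then controls the $\ell^q$ norm by $t$ on a set of measure $\lesssim\|F\|_{L^1}/t$.

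If this direct route becomes technical, we would instead dominate $M_{\qn}$ pointwise by a finite sum of dyadic maximal operators over adjacent dyadic systems in the space of homogeneous type (Christ / Hytönen--Kairema). For dyadic maximal operators the $\ell^q$-valued weak $(1,1)$ estimate is standard, because the stopping cubes are disjoint.
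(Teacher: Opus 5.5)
Your opening step is exactly what the paper does. It gives no proof of its own: it observes (Section~2) that $(\mathbb{R}^d,\rho_A,\mu)$ is a space of homogeneous type, which your computation $|B_{\rho_A}(x,r)|\asymp r$ verifies, and then quotes Theorem~1.2 of Grafakos--Liu--Yang. The self-contained rederivation you add follows the classical Fefferman--Stein scheme, and the cases $q=\infty$, $p=q$ and $q<p<\infty$ are fine as sketched. Your dyadic fallback (Hyt\"onen--Kairema) also works. In a genuinely nested system, off the stopping cubes $M^{\mathcal D}f_i$ coincides with $M^{\mathcal D}$ of the function obtained by averaging $|f_i|$ over each stopping cube.

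Two claims in your ``direct route'' for $1<p<q$ are wrong as stated.

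First, the dilated cubes $\mathcal Q$ are not nested for a general expansive $A$. Nestedness requires $A[0,1]^d$ to be a union of integer translates of $[0,1]^d$, which already fails for $A=\tfrac32 I_d$. So you must use Christ-type cubes or a Whitney covering of $\{M_{\rho_A}F>t\}$ instead.

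Second, the bad part cannot be treated cube by cube. The mean-zero property is irrelevant for the positive operator $M_{\rho_A}$. Summing the per-cube bounds $M_{\rho_A}b_{i,Q}\lesssim M_{\rho_A}(\lambda_{i,Q}\1_Q)$, with $\lambda_{i,Q}=|Q|^{-1}\int_Q|f_i|$, would require an $\ell^1$-valued maximal inequality, and that is false. Instead, dominate the whole sum at once:
\begin{itemize}
\item If $x\notin\bigcup_Q Q^*$ and $B\ni x$ is a ball, every $Q$ meeting $B$ lies in a fixed dilate $\tilde B$ of $B$ with $|\tilde B|\asymp|B|$. Hence $M_{\rho_A}\bigl(\sum_Q b_{i,Q}\bigr)(x)\lesssim M_{\rho_A}\tilde f_i(x)$, where $\tilde f_i:=\sum_Q\lambda_{i,Q}\1_Q$.
\item By Minkowski's inequality, $\|(\tilde f_i)_i\|_{\ell^q}\lesssim t$ pointwise, and $\|(\tilde f_i)_i\|_{L^1(\ell^q)}\le\|F\|_{L^1}$.
\item Apply the case $p=q$ to $(\tilde f_i)_i$ and then Chebyshev's inequality. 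This gives the weak $(1,1)$ bound off $\bigcup_Q Q^*$, and that exceptional set has measure $\lesssim\|F\|_{L^1}/t$.
\end{itemize}
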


\subsection{Case of equal determinants}
In this subsection, we treat the case of matrices with equal determinant.

\begin{lem} \label{sc}
Suppose that  $A, B \in \mathrm{GL}(d, \R)$ are expansive dilations such that $A$ and $B$ are equivalent and $|\det (A)| = |\det (B)|$. Then, for each $j\in \Z$, there exists a bijection $\pi_j: \Z^d \to \Z^d$ such that for any $(c_{j,k})\in \mathbb C^{\Z \times \Z^d}$, letting $s=(s_{j,k})$ denote the sequence satisfying $s_{j,\pi_j(k)} = c_{j,k}$ for $j\in \Z$, $k\in\Z^d$, we have
\[
c^{A}_j(x) \asymp s^{B}_j(x) \qquad \text{for all } j\in \Z, \; x\in \R^d,
\]
with equivalence constants depending only on $A$ and $B$.
\end{lem}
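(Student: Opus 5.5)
The plan is to build $\pi_j$ from Lemma \ref{bd5} applied at scale $j$, and then compare the two weighted sums term by term. Since $|\det(A)|=|\det(B)|$, the exponent in Lemma \ref{lem:equivalent} is $\varepsilon=1$. Equivalence of $A$ and $B$ therefore gives $M:=\sup_{j\in\Z}\|A^{-j}B^{j}\|<\infty$. The same statement with the roles of $A$ and $B$ interchanged gives $\sup_{j}\|B^{-j}A^{j}\|<\infty$.

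For fixed $j$, apply Lemma \ref{bd5} to the pair $(A^j,B^j)$, which have equal absolute determinants. This produces a bijection $\pi_j:\Z^d\to\Z^d$ with
\[
\sup_{k}\|k-A^{-j}B^j\pi_j(k)\|\le \sqrt d(1+M)=:D,
\]
and $D$ is independent of $j$. Now fix $x$, write $y:=A^{-j}x$, and compare the weights $1+\rho_A(y-k)$ and $1+\rho_B(B^{-j}x-\pi_j(k))$. With the substitution $s_{j,\pi_j(k)}=c_{j,k}$ and reindexing by the bijection $\pi_j$, we get
\[
(s^B_j(x))^r=\sum_k |c_{j,k}|^r\big(1+\rho_B(B^{-j}x-\pi_j(k))\big)^{-\lambda}.
\]
Hence it suffices to show
\[
1+\rho_A(y-k)\asymp 1+\rho_B\big(B^{-j}A^{j}(y-k)+B^{-j}A^j(A^{-j}B^j\pi_j(k)-k)\big)
\]
uniformly in $j,k,y$, because $B^{-j}x-\pi_j(k)=B^{-j}A^j\big(y-A^{-j}B^j\pi_j(k)\big)$.

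The key step, and the main obstacle, is this uniform weight comparison. The vector $w:=A^{-j}B^j\pi_j(k)-k$ satisfies $\|w\|\le D$. By the quasi-triangle inequality, $1+\rho_A(y-k-w)\asymp 1+\rho_A(y-k)$, since $\rho_A$ is bounded on the ball of radius $D$. It remains to show
\[
\rho_B(B^{-j}A^j z)\asymp \rho_A(z)
\]
uniformly in $j$ and $z$. Here homogeneity helps: $\rho_B(B^{-j}A^jz)=|\det B|^{-j}\rho_B(A^jz)$. Equivalence gives $\rho_B\asymp\rho_A$, so
\[
\rho_B(A^jz)\asymp\rho_A(A^jz)=|\det A|^{j}\rho_A(z),
\]
and the factors cancel because $|\det A|=|\det B|$. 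So the potentially delicate comparison actually follows directly from $\rho_A\asymp\rho_B$ and homogeneity, with constants independent of $j$. The only genuinely uniform input needed is the displacement bound $D$, which is where $\sup_j\|A^{-j}B^j\|<\infty$ enters.

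Combining, each summand in $c^A_j(x)^r$ is comparable to the corresponding summand of $s^B_j(x)^r$. Summing and taking $r$-th roots gives $c^A_j\asymp s^B_j$, with constants depending only on $A$, $B$ (and $r,\lambda$).
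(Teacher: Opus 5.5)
Your proof is correct and follows essentially the same route as the paper. The ingredients match: a scale-$j$ bijection from Lemma~\ref{bd5} whose displacement is bounded uniformly in $j$ via $\sup_j\|A^{-j}B^j\|<\infty$, the quasi-triangle inequality to absorb that displacement, and $\rho_A\asymp\rho_B$ together with homogeneity and $|\det(A)|=|\det(B)|$ to swap the quasi-norms. The only differences are that you work in the rescaled variable $y=A^{-j}x$ where the paper works directly in $x$, and a harmless sign slip: since $B^{-j}x-\pi_j(k)=B^{-j}A^j(y-k-w)$, your displayed argument should read $B^{-j}A^j(y-k)-B^{-j}A^j w$, which is the form you actually use in the next step.
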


\begin{proof}
Throughout the proof, we let $\rho_A, \rho_B$ be symmetric homogeneous quasi-norms.

Let $j\in \Z$ be arbitrary. By Lemma \ref{bd5}, there exists a bijection $\pi_j: \Z^d \to \Z^d$ such that
\[
\sup_{k \in \mathbb{Z}^d} \| k - A^{-j} B^j\pi_j (k) \| \le \sqrt{d} (1+ ||A^{-j} B^j||).
\]
Since the matrices $A$ and $B$ are equivalent, we have that $\sup_{j\in \Z} ||A^{-j} B^j|| <\infty$. Hence, there exists a constant $C>0$ such that
\[
\sup_{k \in \mathbb{Z}^d} \rho_A( k - A^{-j} B^j\pi_j (k) ) \le C \qquad\text{for all }j\in \Z.
\]
Using the homogeneity of $\rho_A$, we get
\[
|\det(A)|^{-j} \rho_A(A^j k - B^j \pi_j(k)) \le C.
\]
Given $x \in \R^d$, an application of  the triangle inequality gives
\[
\rho_A(x-A^jk)  \le c\big( \rho_A(x-B^j\pi_j(k))+\rho_A(A^j k - B^j \pi_j(k)) \big).
\]
Hence,
\[
|\det(A)|^{-j} \rho_A(x-A^jk) \le c (|\det(A)|^{-j} \rho_A(x-B^j\pi_j(k)) +C).
\]
A similar argument shows
\[
 |\det(A)|^{-j} \rho_A (x-B^j \pi_j(k)) \leq c (|\det(A)|^{-j} \rho_A (x - A^j k) + C).
\]
Consequently,
\begin{align*}
1+ |\det(A)|^{-j} \rho_A(x-A^jk) &\asymp 1+ |\det(A)|^{-j} \rho_A(x-B^j\pi_j(k)) \\
&\asymp 1+ |\det(B)|^{-j} \rho_B(x-B^j\pi_j(k)),
\end{align*}
and thus
\[
\begin{aligned}
(c^{A}_j(x) )^r &=  \sum_{k\in \Z^d} \frac{|c_{j,k}|^r}{(1+\rho_A(A^{-j}x - k))^\lambda} \\
& \asymp \sum_{k\in \Z^d} \frac{|c_{j,k}|^r}{(1+\rho_B(B^{-j}x - \pi_j(k)))^\lambda} \\
& = \sum_{k\in \Z^d} \frac{|s_{j,\pi_j(k)}|^r}{(1+\rho_B(B^{-j}x - \pi_j(k)))^\lambda} \\
&= (s^B_j(x))^r,
\end{aligned}
\]
as desired.
\end{proof}

Using the previous lemma, we now prove the following theorem.

\begin{thm} \label{main}
Let $\alpha \in \mathbb{R}$ and $p,q \in (0, \infty]$.
Suppose that $A, B \in \mathrm{GL}(d, \R)$ are expansive matrices that are equivalent and satisfy $|\det (A)|=|\det (B)|$.
Then, for each $j\in \Z$, there exists a bijection $\pi_j: \Z^d \to \Z^d$ such that 
an induced permutation operator $P: \mathbb C^{\Z \times \Z^d} \to \mathbb C^{\Z \times \Z^d}$ respecting scales, which is defined by
\[
P( (c_{j,k})_{j\in \Z, k\in \Z^d}) = (c_{j,\pi_j^{-1}(k)})_{j\in \Z, k\in \Z^d},
\]
is an isomorphism between between the spaces $\dot {\mathbf f}^\alpha_{p,q}(A)$ and $\dot {\mathbf f}^\alpha_{p,q}(B)$. That is, $P$ is onto and
\begin{equation}\label{main2}
||P(c)||_{\dot {\mathbf f}^\alpha_{p,q}(B)} \asymp ||c||_{\dot {\mathbf f}^\alpha_{p,q}(A)}
\end{equation}
for all $c\in \mathbb C^{\Z \times \Z^d}$.
\end{thm}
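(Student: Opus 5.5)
We take the bijections $\pi_j$ from Lemma \ref{sc} and define $P$ as in the statement. Then $s := P(c)$ satisfies $s_{j,\pi_j(k)} = c_{j,k}$. Since each $\pi_j$ is a bijection, $P$ is onto, and its inverse is the permutation operator built from the $\pi_j^{-1}$. The main task is the norm equivalence \eqref{main2}. Because $|\det(A)|=|\det(B)|$, the weights $|\det(A)|^{-j(\alpha+1/2)}$ and $|\det(B)|^{-j(\alpha+1/2)}$ coincide, so the problem reduces to comparing, scale by scale, the functions
\[
\sum_{k} |c_{j,k}|\1_{Q^A_{j,k}} \quad\text{and}\quad \sum_k |s_{j,k}|\1_{Q^B_{j,k}},
\]
where $Q^A_{j,k} = A^j([0,1]^d+k)$ and $Q^B_{j,k}=B^j([0,1]^d+k)$. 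By symmetry (interchange $A$ and $B$, replace $\pi_j$ by $\pi_j^{-1}$), it suffices to prove $\|s\|_{\dot{\mathbf f}^\alpha_{p,q}(B)} \lesssim \|c\|_{\dot{\mathbf f}^\alpha_{p,q}(A)}$.

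\textbf{Case $p<\infty$.} Choose $0<a<\min(1,p,q)$, take $r=a$, and take $\lambda>1$. By \eqref{main5} applied to $B$ and then Lemma \ref{sc},
\[
\sum_k |s_{j,k}|\1_{Q^B_{j,k}} \lesssim s^B_j \asymp c^A_j .
\]
Lemma \ref{max}, applied with $E_{j,k}=Q^A_{j,k}$ and $\eta=1/2$, gives
\[
c^A_j \lesssim \Big(M_{\rho_A}\Big(\sum_k |c_{j,k}|\1_{Q^A_{j,k}}\Big)^a\Big)^{1/a}.
\]
We multiply by the weight, take the $\ell^q$ sum over $j$ and the $L^p$ norm, and rewrite the result as an $L^{p/a}(\ell^{q/a})$ quasi-norm. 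Since $p/a>1$ and $q/a>1$, Lemma \ref{sfvvi} applies and yields the bound by $\|c\|_{\dot{\mathbf f}^\alpha_{p,q}(A)}$. The case $q=\infty$ is handled the same way, using $q/a=\infty$ in Lemma \ref{sfvvi}.

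\textbf{Case $p=\infty$, $q=\infty$.} The norm is a supremum of weighted $|c_{j,k}|$, and $P$ only permutes entries within each scale. So the norm is preserved exactly.

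\textbf{Case $p=\infty$, $q<\infty$.} This is the step I expect to be the main obstacle, because the supremum runs over cubes $P\in\mathcal Q_B$, which do not correspond to the cubes in $\mathcal Q_A$. Fix a $B$-cube $P$ of scale $\ell$. Since $\rho_A\asymp\rho_B$ and the determinants are equal, $P$ is contained in a $\rho_A$-ball of comparable scale. That ball is covered by boundedly many $A$-cubes of scale $\ell+m_0$, for a fixed $m_0$. Only the scales $j\le \ell$ contribute. For those scales we replace $s$ by $c$ pointwise through $s^B_j\asymp c^A_j$ as above. We then need the localized estimate
\[
\frac{1}{|P|}\int_P \sum_{j\le \ell}\big(w_j c^A_j\big)^q \lesssim \|c\|^q_{\dot{\mathbf f}^\alpha_{\infty,q}(A)},
\]
where $w_j=|\det(A)|^{-j(\alpha+1/2)}$ is the weight in the sequence space norm. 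This estimate is the standard localization argument behind the $p=\infty$ Fefferman--Stein type bound. For each enlarged cube $\tilde P$, we split $c$ at scales $j\le\ell$ into the part with $Q^A_{j,k}\subset \tilde P$ and the remainder:
\begin{enumerate}[(i)]
\item The local part is handled by the $L^{p_0}(\ell^q)$ estimate from the $p<\infty$ case with some $q<p_0<\infty$, together with Hölder's inequality.
\item The far part is controlled by the decay $(1+\rho_A)^{-\lambda}$ with $\lambda$ large. It is summed over dyadic annuli of $A$-cubes, each of which has average bounded by the $\dot{\mathbf f}^\alpha_{\infty,q}(A)$ norm.
\end{enumerate}
If this bookkeeping becomes too heavy, the fallback is Theorem \ref{pinf}, which reduces the problem to arbitrary large subsets $E_{j,k}$. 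These can be chosen as $E_{j,k}=Q^B_{j,\pi_j(k)}\cap \tilde Q$ after controlling the overlaps.
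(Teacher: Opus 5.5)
For $p<\infty$ your argument is the paper's: majorize by $s^B_j$ via \eqref{main5}, pass to $c^A_j$ by Lemma \ref{sc}, apply Lemma \ref{max} with $r=a<\min(p,q)$, then Lemma \ref{sfvvi}, and use symmetry. Your observation that $P$ is an exact isometry when $p=q=\infty$ is also correct. For $p=\infty$, $q<\infty$ the paper only says the case is a simple modification using Theorem \ref{pinf}. You instead work directly with the Carleson quasi-norm and split into local and far parts. That is a legitimate and more self-contained route, and your reduction to $\frac{1}{|P|}\int_P\sum_{j\le\ell}(w_jc^A_j)^q$ is fine.

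The far part closes more simply than you indicate. Taking the supremum in the definition of $\|c\|_{\dot{\mathbf f}^\alpha_{\infty,q}(A)}$ at the cube $Q^A_{j,k}$ itself gives $w_j|c_{j,k}|\le\|c\|_{\dot{\mathbf f}^\alpha_{\infty,q}(A)}$. Choose $\tilde P$ to contain a $\rho_A$-ball around $P$ of radius $C|\det(A)|^{\ell}$ with $C$ large. Then every far index satisfies $\rho_A(A^{-j}x-k)\gtrsim|\det(A)|^{\ell-j}$ for $x\in P$. Counting lattice points in $\rho_A$-annuli with $\lambda>1$ yields $w_j(c^{\mathrm{far}})^A_j(x)\lesssim|\det(A)|^{-(\ell-j)(\lambda-1)/r}\|c\|_{\dot{\mathbf f}^\alpha_{\infty,q}(A)}$, which is $q$-summable over $j\le\ell$.

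The local part does not close as you wrote it. Let $F=(\sum_j(w_j(c^{\mathrm{loc}})^A_j)^q)^{1/q}$. With $q<p_0<\infty$, H\"older gives $\frac{1}{|P|}\int_P F^q\le(\frac{1}{|P|}\int_P F^{p_0})^{q/p_0}$. The $p_0<\infty$ estimate then leaves you needing $\|c^{\mathrm{loc}}\|^{p_0}_{\dot{\mathbf f}^\alpha_{p_0,q}(A)}\lesssim|\tilde P|\,\|c\|^{p_0}_{\dot{\mathbf f}^\alpha_{\infty,q}(A)}$. For $p_0>q$ this is a John--Nirenberg (reverse H\"older) property of Carleson sequences. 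It is true, but it is a separate theorem and does not follow from H\"older, because the definition of $\dot{\mathbf f}^\alpha_{\infty,q}(A)$ only controls $q$-th power integrals.

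Take $p_0=q$ instead. Then no H\"older step is needed. Write $\tilde P=\bigcup_i P_i$ with boundedly many $A$-cubes of scale $\ell+m_0\ge j$. The $p<\infty$ chain with $p=q$, followed by the Carleson definition, gives
$\frac{1}{|P|}\int_P F^q\lesssim\frac{1}{|P|}\|c^{\mathrm{loc}}\|^q_{\dot{\mathbf f}^\alpha_{q,q}(A)}$ and
$\|c^{\mathrm{loc}}\|^q_{\dot{\mathbf f}^\alpha_{q,q}(A)}\le\sum_i\int_{P_i}\sum_{j\le\operatorname{scale}(P_i)}\sum_k(w_j|c_{j,k}|\1_{Q^A_{j,k}})^q\le|\tilde P|\,\|c\|^q_{\dot{\mathbf f}^\alpha_{\infty,q}(A)}$.
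Since $|\tilde P|\asymp|P|$, this finishes the local part.

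Your fallback through Theorem \ref{pinf} is not viable as stated. Lemma \ref{bd5} only gives bounded scaled displacement between $Q^A_{j,k}$ and $Q^B_{j,\pi_j(k)}$: their half-open versions intersect, but need not overlap in a set of measure $\gtrsim|\det(A)|^j$. So sets built as intersections with $Q^B_{j,\pi_j(k)}$ need not satisfy the size condition $|E_{j,k}|>\eta|\det(A)|^j$ required there.
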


\begin{proof}
Assume first that $p<\infty$.
Take $a>0$ such that $p/a>1$ and $q/a>1$. Let $r=a$ and $\lambda>r/a=1$.
 Let $(c_{j,k})\in \mathbb C^{\Z \times \Z^d}$ be arbitrary.
Under the assumptions, an application of Lemma \ref{sc} yields a bijection $\pi_j : \mathbb{Z}^d \to \mathbb{Z}^d$ such that the sequence $s_{j,k} = c_{j,\pi_j^{-1}(k)}$ for $j\in \Z$, $k\in\Z^d$ satisfies $c_j^A (x) \asymp s_j^B (x)$ for all $j \in \mathbb{Z}$ and $x \in \mathbb{R}^d$. A combination of the above gives
\[
\begin{aligned}
||c||_{\TLsA} & = \bigg\| \bigg( \sum_{j\in \Z} \sum_{k\in \Z^d} ( |\det(A)|^{-j(\alpha+1/2)}|c_{j,k}| \1_{A^j ([0, 1]^d + k)})^q \bigg)^{1/q} \bigg\|_{L^p} \\
& \lesssim \bigg( \int_{\R^d} \bigg( \sum_{j\in \Z} |\det(A)|^{-jq(\alpha+1/2)} |c^A_j(x)|^q \bigg)^{p/q} dx \bigg)^{1/p}  \\
& \asymp \bigg( \int_{\R^d} \bigg( \sum_{j\in \Z}  |\det(B)|^{-jq(\alpha+1/2)}|s^B_j(x)|^q \bigg)^{p/q} dx \bigg)^{1/p}
\\
&\lesssim
\bigg\| \bigg( \sum_{j\in \Z}|\det(B)|^{-jq(\alpha+1/2)}  \bigg( M_{\rho_B} \bigg( \sum_{k\in \Z^d} |s_{j,k}| \1_{B^j([0,1]^d+k)}\bigg)^a \bigg)^{q/a} \bigg)^{a/q} \bigg\|_{L^{p/a}}^{1/a}.
\end{aligned}
\]
Since $p/a>1$ and $q/a>1$, an application of Lemma \ref{sfvvi} allows us  to obtain the bound
$||c||_{\TLsA} \lesssim ||s||_{\TLsB}$. By a symmetric argument, we obtain $||s||_{\TLsB} \lesssim ||c||_{\TLsA}$. This completes the proof in the case $p<\infty$. The proof in the case $p=\infty$ is a simple modification of this argument using Theorem \ref{pinf}.
\end{proof}

\subsection{General case}

Using a similar approach as in the proof of Theorem \ref{main}, we can show its variant when $|\det(A) | \ne |\det(B)|$. 
For this, we need a variant of Lemma \ref{sc} dealing with the case $|\det(A)| > |\det(B)|$. The opposite case $|\det(A)| < |\det(B)|$ follows automatically by symmetry.

\begin{lem} \label{scd}
Suppose that  $A, B \in \mathrm{GL}(d, \R)$ are expansive matrices that are equivalent and satisfy $|\det (A)| > |\det (B)|$. Let $\varepsilon = \ln |\det(A)| /\ln |\det(B)|$. 
Then, for each $j\in \Z$, there exists an injection $\pi_{j}: \Z^d \to \Z^d$ with the following property. For any $c=(c_{j,k}) \in \mathbb C^{\Z \times \Z^d}$, we let $(s_{i,k})_{i\in \Z, k\in \Z^d}$ denote the sequence 
\begin{equation}\label{scd1}
s_{i,k} =
\begin{cases}  c_{j ,\pi_j ^{-1}(k)} & \text{if } i=\lfloor j \varepsilon \rfloor \text{ and } k\in \pi_j(\Z^d), \\
0 & \text{otherwise}.
\end{cases}
\end{equation}
Then, for any $j\in \Z$ and $i=\lfloor j \varepsilon \rfloor$, we have
\[
c^{A}_j(x) \asymp s^{B}_i(x) \qquad \text{for all } \quad x\in \R^d,
\]
with equivalence constants depending only on $A$ and $B$.
\end{lem}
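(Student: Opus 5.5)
The plan is to mimic the proof of Lemma \ref{sc}, now pairing scale $j$ for $A$ with scale $i=\lfloor j\varepsilon\rfloor$ for $B$. Since $|\det(A)|>|\det(B)|>1$, we have $\varepsilon>1$. Hence the map $j\mapsto \lfloor j\varepsilon\rfloor$ is injective: consecutive values of $j\varepsilon$ differ by $\varepsilon>1$, so their floors are distinct. Therefore \eqref{scd1} is well defined, each $B$-scale receives data from at most one $A$-scale, and $s^B_i$ involves only the coefficients $c_{j,\cdot}$.

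Fix $j$ and set $i=\lfloor j\varepsilon\rfloor$. Then
\[
|\det(B)|^{i}=|\det(B)|^{j\varepsilon-\theta}=|\det(A)|^{j}|\det(B)|^{-\theta}, \qquad \theta\in[0,1),
\]
so $|\det(A)^{j}|\ge|\det(B)^{i}|$ and $|\det(A)|^j\asymp|\det(B)|^i$ uniformly in $j$. Apply Lemma \ref{bd5} to the pair $(A^j,B^i)$. This yields an injection $\pi_j:\Z^d\to\Z^d$ with
\[
\sup_k\|k-A^{-j}B^{i}\pi_j(k)\|\le \sqrt d\,(1+\|A^{-j}B^{\lfloor \varepsilon j\rfloor}\|).
\]
By Lemma \ref{lem:equivalent}, equivalence of $A$ and $B$ makes the right-hand side bounded uniformly in $j$. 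Using the continuity of $\rho_A$ on compacts (or the comparison of $\rho_A$ with $\|\cdot\|$ stated in Section \ref{sec:expansive}), we obtain
\[
\rho_A(k-A^{-j}B^i\pi_j(k))\le C,
\]
and by homogeneity $|\det(A)|^{-j}\rho_A(A^jk-B^i\pi_j(k))\le C$.

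The quasi-triangle inequality then gives, exactly as in Lemma \ref{sc},
\[
1+|\det(A)|^{-j}\rho_A(x-A^jk)\asymp 1+|\det(A)|^{-j}\rho_A(x-B^i\pi_j(k)).
\]
Next we convert to $\rho_B$ using $\rho_A\asymp\rho_B$ and $|\det(A)|^{-j}\asymp|\det(B)|^{-i}$:
\[
1+|\det(A)|^{-j}\rho_A(x-B^i\pi_j(k))\asymp 1+\rho_B(B^{-i}x-\pi_j(k)).
\]
Substituting into \eqref{cja}, and writing $s_{i,\pi_j(k)}=c_{j,k}$, gives
\[
(c^A_j(x))^r\asymp\sum_{k}\frac{|s_{i,\pi_j(k)}|^r}{(1+\rho_B(B^{-i}x-\pi_j(k)))^\lambda}.
\]
Because $s_{i,m}=0$ for $m\notin\pi_j(\Z^d)$ and $\pi_j$ is injective, this last sum is exactly $(s^B_i(x))^r$.

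The main obstacle is the scale mismatch. We must ensure that the determinant inequality needed for Lemma \ref{bd5} holds for every $j$, including negative $j$, where the floor rounds downward. We also need $\|A^{-j}B^{\lfloor\varepsilon j\rfloor}\|$ to be bounded, and the factor $|\det(B)|^{-\theta}$ to be absorbed uniformly. The floor convention in Lemma \ref{lem:equivalent} matches the choice of $i$, so the bound there applies directly. The factor $|\det(B)|^{-\theta}$ lies in $(|\det(B)|^{-1},1]$ and is absorbed into the constants. All constants then depend only on $A$, $B$, $r$ and $\lambda$.
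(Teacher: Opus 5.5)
Your proposal is correct and follows the paper's proof essentially step by step. Both apply Lemma \ref{bd5} to $(A^j,B^{\lfloor j\varepsilon\rfloor})$, use the uniform bound from Lemma \ref{lem:equivalent}, apply the quasi-triangle inequality in both directions, and convert via $\rho_A\asymp\rho_B$ together with $|\det(A)|^j\asymp|\det(B)|^{i}$; you also add a useful check, left implicit in the paper, that $j\mapsto\lfloor j\varepsilon\rfloor$ is injective, so $s$ is well defined. One small correction: homogeneous quasi-norms are only assumed Borel measurable, so you should bound $\rho_A$ on bounded sets via the Euclidean comparison in Section \ref{sec:expansive}, not by continuity.
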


\begin{proof}
Since $A$ and $B$ are equivalent, it follows from Lemma \ref{lem:equivalent} that
\begin{equation}\label{inj5}
\sup_{j \in \mathbb{Z}} \| A^{-j} B^{\lfloor \varepsilon j \rfloor} \| < \infty.
\end{equation}
Let $j\in \Z$ be arbitrary and let $i=\lfloor j \varepsilon \rfloor$. Note that $|\det(A^j)| \geq |\det(B^i)|$ as $|\det(B)| > 1$.
By Lemma \ref{bd5}, there exists therefore an injection $\pi_j: \Z^d \to \Z^d$ such that
\[
\sup_{k \in \mathbb{Z}^d} \| k - A^{-j} B^{i} \pi_j (k) \| \le \sqrt{d} (1+ ||A^{-j} B^{i}||).
\]
By \eqref{inj5} there exists a constant $C>0$ such that
\[
\sup_{k \in \mathbb{Z}^d} \rho_A( k - A^{-j} B^{i} \pi_j (k) ) \le C \qquad\text{for all }j\in \Z.
\]
For convenience, take a symmetric quasi-norm satisfying $\rho_A(-x) = \rho_A (x)$ for all $x\in \R^d$.
Using the homogeneity of $\rho_A$, we get
\[
|\det(A)|^{-j} \rho_A(A^j k - B^i \pi_j(k)) \le C.
\]
Given $x \in \R^d$, an application of  the triangle inequality gives
\[
\rho_A(x-A^jk)  \le c\big( \rho_A(x-B^i\pi_j(k))+\rho_A(A^j k - B^i \pi_j(k)) \big)
\]
and thus
\[
|\det(A)|^{-j} \rho_A(x-A^jk) \le c (|\det(A)|^{-j} \rho_A(x-B^i \pi_j(k)) +C).
\]
Similarly, it is shown that
\[
 |\det(A)|^{-j} \rho_A (x-B^i \pi_j(k)) \leq c (|\det(A)|^{-j} \rho_A (x - A^j k) + C).
\]
Therefore,
\begin{align*}
1+ |\det(A)|^{-j} \rho_A(x-A^jk) &\asymp 1+ |\det(A)|^{-j} \rho_A(x-B^i\pi_j(k)) \\
&\asymp 1+ |\det(B)|^{-i} \rho_B(x-B^i \pi_j(k)),
\end{align*}
which yields
\[
\begin{aligned}
(c^{A}_j(x) )^r &=  \sum_{k\in \Z^d} \frac{|c_{j,k}|^r}{(1+\rho_A(A^{-j}x - k))^\lambda} \\
& \asymp \sum_{k\in \Z^d} \frac{|c_{j,k}|^r}{(1+\rho_B(B^{-i}x - \pi_j(k)))^\lambda} \\
& = \sum_{k\in \Z^d} \frac{|s_{i,\pi_j(k)}|^r}{(1+\rho_B(B^{-i}x - \pi_j(k)))^\lambda} \\
&= (s^B_i (x))^r.
\end{aligned}
\]
This completes the proof.
\end{proof}

\begin{thm} \label{Main}
Let $\alpha \in \mathbb{R}$ and $p,q \in (0, \infty]$.
Suppose that $A, B \in \mathrm{GL}(d, \R)$ are expansive matrices that are equivalent and satisfy $|\det(A)|>|\det(B)|$.
Let $\varepsilon = \ln |\det(A)| /\ln |\det(B)|$. For each $j\in \Z$, there exists an injection $\pi_j: \Z^d \to \Z^d$, such that induced operators $S$ and $T$ on  $\mathbb C^{\Z \times \Z^d}$, which are defined by
\[
\begin{aligned}
S( (c_{j,k})_{j\in \Z, k\in \Z^d}) & = (s_{i,k})_{i\in \Z, k\in \Z^d}, \qquad \text{where }s_{i,k} =
\begin{cases}  c_{j ,\pi_j ^{-1}(k)} & \text{if }i=\lfloor j \varepsilon \rfloor \text{ and } k\in \pi_j(\Z^d),\\
0 & \text{otherwise},
\end{cases}
\\
T( (s_{i,k})_{i\in \Z, k\in \Z^d}) & = (c_{j,k})_{j\in \Z, k\in \Z^d},
\qquad \text{where } c_{j,k} =  s_{i,\pi_j(k)}, \ i=\lfloor j \varepsilon \rfloor, k\in \Z^d,
 \end{aligned}
\]
satisfy:
\begin{enumerate}
\item $S: \TLsA \hookrightarrow \TLsB$ is a bounded  embedding, 
\item $T: \TLsB \to \TLsA$ is bounded and surjective, and 
\item $T \circ S$ is the identity. 
\end{enumerate}
\end{thm}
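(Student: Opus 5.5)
The plan is to follow the proof of Theorem \ref{main}, replacing Lemma \ref{sc} by Lemma \ref{scd}. Let $\pi_j$ be the injections from Lemma \ref{scd}. First note that $|\det(A)|>|\det(B)|>1$ gives $\varepsilon>1$, so the map $j\mapsto \lfloor j\varepsilon\rfloor$ is injective on $\Z$. Hence each $B$-scale $i$ receives at most one $A$-scale $j$, and $S$ is well defined.

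Item (3) is then immediate. For $c_{j,k}$ we have $(Sc)_{\lfloor j\varepsilon\rfloor,\pi_j(k)}=c_{j,\pi_j^{-1}(\pi_j(k))}=c_{j,k}$, so $T\circ S=\mathrm{id}$. This also gives surjectivity of $T$ in (2), provided $S$ maps $\TLsA$ into $\TLsB$. Injectivity of $S$ follows from $T\circ S=\mathrm{id}$, and a lower bound $\|c\|_{\TLsA}\lesssim\|Sc\|_{\TLsB}$ follows from boundedness of $T$. So everything reduces to the two norm estimates $\|Sc\|_{\TLsB}\lesssim\|c\|_{\TLsA}$ and $\|Ts\|_{\TLsA}\lesssim\|s\|_{\TLsB}$.

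Two facts are used in both estimates, with $i=\lfloor j\varepsilon\rfloor$.
\begin{itemize}
\item The weights match. Since $|\det(B)|^{j\varepsilon}=|\det(A)|^j$, we have $|\det(A)|^{j}/|\det(B)|=|\det(B)|^{j\varepsilon-1}<|\det(B)|^i\le|\det(A)|^j$. Therefore $|\det(B)|^{-i(\alpha+1/2)}\asymp|\det(A)|^{-j(\alpha+1/2)}$ uniformly in $j$.
\item The majorizing functions are monotone in the moduli of the coefficients.
\end{itemize}

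For $p<\infty$, choose $a$ with $p/a>1$ and $q/a>1$, let $r=a$ and $\lambda>1$.

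\emph{Boundedness of $S$.} Put $s=Sc$. Only scales $i$ in the image of $j\mapsto\lfloor j\varepsilon\rfloor$ are nonzero. By \eqref{main5} and Lemma \ref{scd},
\[
\sum_k|s_{i,k}|\1_{B^i([0,1]^d+k)}\lesssim s_i^B\asymp c_j^A .
\]
Lemma \ref{max} with $E_{j,k}=Q_{j,k}$ then gives
\[
c_j^A\lesssim \Big(M_{\rho_A}\big(\textstyle\sum_k|c_{j,k}|\1_{Q_{j,k}}\big)^a\Big)^{1/a}.
\]
Summing the $q$-th powers with the weights and applying Lemma \ref{sfvvi} in $L^{p/a}(\ell^{q/a})$ yields $\|Sc\|_{\TLsB}\lesssim\|c\|_{\TLsA}$.

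\emph{Boundedness of $T$.} Put $c=Ts$. Then $S(Ts)$ is the restriction of $s$ at scale $i$ to the entries $k\in\pi_j(\Z^d)$, so $|S(Ts)_{i,k}|\le|s_{i,k}|$. By Lemma \ref{scd} and monotonicity, $c_j^A\asymp (STs)_i^B\le s_i^B$. Combining this with \eqref{main5}, Lemma \ref{max} for $B$, injectivity of $j\mapsto i$ (so that each $i$ is counted at most once), and Lemma \ref{sfvvi} for $\rho_B$ gives $\|Ts\|_{\TLsA}\lesssim\|s\|_{\TLsB}$.

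The main obstacle is $p=\infty$. There the norm is a supremum over dilated cubes $P$, with a scale restriction $j\le\operatorname{scale}(P)$, and the families of $A$-cubes and $B$-cubes differ. I would handle this in two steps.
\begin{itemize}
\item Use Theorem \ref{pinf} and equivalence of $\rho_A$ and $\rho_B$ to compare an $A$-cube $P$ of scale $\ell$ with a bounded number of $B$-cubes of scale about $\lfloor\ell\varepsilon\rfloor$ covering $P$, each of comparable measure. The converse comparison is analogous.
\item Check that the scale restriction $j\le \ell$ becomes $i\le\lfloor\ell\varepsilon\rfloor$ up to a bounded shift. Handle the boundary scales separately, via the $q=\infty$ type bound $|\det(A)|^{-j(\alpha+1/2)}|c_{j,k}|\lesssim\|c\|$, or by the localized version of the Fefferman–Stein argument as in \cite{bownik2008duality}.
\end{itemize}
The local estimate on each cube then proceeds exactly as in the case $p<\infty$.
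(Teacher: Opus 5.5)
Your proposal is correct and follows the paper's proof. For $p<\infty$ you combine Lemma \ref{scd}, \eqref{main5}, Lemma \ref{max} and the Fefferman--Stein inequality to bound $S$ and $T$, then use $T\circ S=\mathrm{id}$ to get surjectivity of $T$ and the embedding property of $S$; the case $p=\infty$ is only sketched, just as in the paper. You are in fact slightly more careful than the paper in three places: you note that the bound for $T$ really rests on $c^A_j\asymp (STs)^B_i\le s^B_i$ by monotonicity, you use the two-sided comparison $|\det(B)|^{-i(\alpha+1/2)}\asymp|\det(A)|^{-j(\alpha+1/2)}$ (needed when $\alpha+1/2<0$), and you check that $j\mapsto\lfloor j\varepsilon\rfloor$ is injective so that $S$ is well defined.
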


That is, the following diagram commutes:
\begin{center}
\begin{tikzcd}[column sep=small]
& \TLsB \arrow[dr, "T"]& \\
\TLsA \arrow[ur, hook, "S"] \arrow[rr, "id"] &  & \TLsA\\
\end{tikzcd}
\end{center}

\begin{proof}
Assume first that $p<\infty$. Since $A$ and $B$ are equivalent, by Lemma \ref{lem:equivalent}, we have
\[
\sup_{j \in \mathbb{Z}} \| A^{-j} B^{\lfloor \varepsilon j \rfloor} \| < \infty.
\]
Take $a>0$ such that $p/a>1$ and $q/a>1$. Let $r=a$ and $\lambda>r/a=1$. 
For any $j\in \Z$, let $\pi_j: \Z^d \to \Z^d$ be an injection as in Lemma \ref{scd}.

We first prove that $T: \TLsB \to \TLsA$ is bounded. For this, let $s=(s_{i,k}) \in \mathbb C^{\Z \times \Z^d}$ be arbitrary and
 define the sequence $c=(c_{j,k})\in \mathbb C^{\Z \times \Z^d}$  by $c=T(s)$.
 Using the estimate $|\det(A)|^{-j} \leq |\det(B)|^{-\lfloor \varepsilon j \rfloor }$ for $j \in \mathbb{Z}$, together with \eqref{main5}, Lemma \ref{max}, and Lemma \ref{scd}, gives
\[
\begin{aligned}
||c||_{\TLsA} & = \bigg\| \bigg( \sum_{j\in \Z} \sum_{k\in \Z^d} ( |\det(A)|^{-j(\alpha+1/2)}|c_{j,k}| \1_{A^j ([0, 1]^d + k)})^q \bigg)^{1/q} \bigg\|_{L^p} \\
& \lesssim \bigg( \int_{\R^d} \bigg( \sum_{j\in \Z} |\det(A)|^{-jq(\alpha+1/2)} |c^A_j(x)|^q \bigg)^{p/q} dx \bigg)^{1/p}  \\
& \lesssim \bigg( \int_{\R^d} \bigg( \sum_{j\in \Z}  |\det(B)|^{-{\lfloor \varepsilon j \rfloor} q(\alpha+1/2)}|s^B_{\lfloor \varepsilon j \rfloor}(x)|^q \bigg)^{p/q} dx \bigg)^{1/p}
\\
& \le \bigg( \int_{\R^d} \bigg( \sum_{i\in \Z}  |\det(B)|^{-i q(\alpha+1/2)}|s^B_i (x)|^q \bigg)^{p/q} dx \bigg)^{1/p}
\\
&\lesssim
\bigg\| \bigg( \sum_{i\in \Z}|\det(B)|^{-i q(\alpha+1/2)}  \bigg( M_{\rho_B} \bigg( \sum_{k\in \Z^d} |s_{i,k}| \1_{B^j([0,1]^d+k)}\bigg)^a \bigg)^{q/a} \bigg)^{a/q} \bigg\|_{L^{p/a}}^{1/a}.
\end{aligned}
\]
The penultimate step used that $\varepsilon >1$, which implies that the sum over $j\in \Z$ appears as a part of the sum over all $i\in \Z$.
Since $p/a>1$ and $q/a>1$, an application of Lemma \ref{sfvvi} allows us  to obtain the bound
$||c||_{\TLsA} \lesssim ||s||_{\TLsB}$. This proves that $T: \TLsB \to \TLsA$ is bounded. 

We next show that $S: \TLsA \to \TLsB$ is bounded. Let $c=(c_{j,k})\in \mathbb C^{\Z \times \Z^d}$ and
 define the sequence $s=(s_{i,k}) \in \mathbb C^{\Z \times \Z^d}$  by $s=S(c)$.  A combination of \eqref{main5}, the definition of the operator $S$, Lemma \ref{max}, and Lemma \ref{scd} yields
 \[
\begin{aligned}
||s||_{\TLsB} & = \bigg\| \bigg( \sum_{i\in \Z} \sum_{k\in \Z^d} ( |\det(B)|^{-i(\alpha+1/2)}|s_{i,k}| \1_{B^i ([0, 1]^d + k)})^q \bigg)^{1/q} \bigg\|_{L^p} \\
& \lesssim \bigg( \int_{\R^d} \bigg( \sum_{i\in \Z} |\det(B)|^{-iq(\alpha+1/2)} |s^B_i(x)|^q \bigg)^{p/q} dx \bigg)^{1/p}  \\
& = \bigg( \int_{\R^d} \bigg( \sum_{j\in \Z}  |\det(B)|^{-{\lfloor \varepsilon j \rfloor} q(\alpha+1/2)}|s^B_{\lfloor \varepsilon j \rfloor}(x)|^q \bigg)^{p/q} dx \bigg)^{1/p}
\\
& \asymp \bigg( \int_{\R^d} \bigg( \sum_{j\in \Z}  |\det(A)|^{-j q(\alpha+1/2)}|c^A_j (x)|^q \bigg)^{p/q} dx \bigg)^{1/p}
\\
&\lesssim
\bigg\| \bigg( \sum_{j\in \Z}|\det(A)|^{-j q(\alpha+1/2)}  \bigg( M_{\rho_A} \bigg( \sum_{k\in \Z^d} |c_{j,k}| \1_{A^j([0,1]^d+k)}\bigg)^a \bigg)^{q/a} \bigg)^{a/q} \bigg\|_{L^{p/a}}^{1/a},
\end{aligned}
\]
where the penultimate step also used that $|\det(A)|^{-j} \asymp |\det(B)|^{-\lfloor \varepsilon j \rfloor}$ for all $j \in \mathbb{Z}$.
Hence, by an application of Lemma \ref{sfvvi}, we have that $||s||_{\TLsB} \lesssim ||c||_{\TLsA}$, which proves that $S: \TLsA \to \TLsB$ is bounded.

The fact that $T \circ S$ is the identity follows from the definitions of $T$ and $S$. Consequently, $T$ is surjective. Finally, the operator $S$ is an embedding, since for any $c\in \TLsA$ we have 
\[
||c||_{\TLsA} = ||T(Sc)||_{\TLsA} \lesssim ||Sc||_{\TLsB} \lesssim ||c||_{\TLsA}.
\]
This completes the proof in the case $p<\infty$. The proof in the case $p=\infty$ is a simple modification of this argument using Theorem \ref{pinf}.
\end{proof}

\section*{Acknowledgements}
The authors thank Felix Voigtlaender for helpful discussions on the results in Section 5.
The first author was partially supported by the NSF grant DMS-2349756.
For the second author, this research was funded in whole or in part by the Austrian
Science Fund (FWF):  10.55776/PAT2545623.  For open access purposes,
the authors have applied a CC BY public copyright license to any author-accepted manuscript
version arising from this submission.

\end{document}